\theoremstyle{plain}
\newtheorem{theorem}{Theorem}[section]
\newtheorem{lemma}{Lemma}[section]
\newtheorem{corollary}{Corollary}[section]
\newtheorem{definition}{Definition}[section]
\theoremstyle{definition}
\newtheorem{remark}{Remark}[section]
\newcommand{\keywords}{\textbf{Key words. }\medskip}
\newcommand{\subjclass}{\textbf{MSC 2010. }\medskip}
\renewcommand{\abstract}{\textbf{Abstract. }\medskip}
\numberwithin{equation}{section}
\begin{document}

\title{Localized peaking regimes for quasilinear parabolic equations}

\author{Andrey~E.~Shishkov and Yevgeniia~A.~Yevgenieva}

\date{}

\maketitle

\begin{abstract}
 This paper deals with the asymptotic behavior as $t\rightarrow T<\infty$ of all weak
(energy) solutions of a class of equations with the following model representative:
\begin{equation*}
    (|u|^{p-1}u)_t-\Delta_p(u)+b(t,x)|u|^{\lambda-1}u=0 \quad
(t,x)\in(0,T)\times\Omega,\,\Omega\in{R}^n,\,n>1,
\end{equation*}
with prescribed global energy function
\begin{equation*}
E(t):=\int_{\Omega}|u(t,x)|^{p+1}dx+
\int_0^t\int_{\Omega}|\nabla_xu(\tau,x)|^{p+1}dxd\tau \rightarrow\infty\ \text{ as
}t\rightarrow T.
\end{equation*}
Here $\Delta_p(u)=\sum_{i=1}^n\left(|\nabla_xu|^{p-1}u_{x_i}\right)_{x_i}$, $p>0$,
$\lambda>p$, $\Omega$ is a bounded smooth domain, $b(t,x)\geq0$. Particularly, in the
case
\begin{equation*}
E(t)\leq
F_\mu(t)=\exp\left(\omega(T-t)^{-\frac1{p+\mu}}\right)\quad\forall\,t<T,\,\mu>0,\,\omega>0,
\end{equation*}
it is proved that solution $u$ remains uniformly bounded as $t\rightarrow T$ in an
arbitrary subdomain $\Omega_0\subset\Omega:\overline{\Omega}_0\subset\Omega$ and the
sharp upper estimate of $u(t,x)$ when $t\rightarrow T$ has been obtained depending on
$\mu>0$ and $s=dist(x,\partial\Omega)$. In the case $b(t,x)>0$
$\forall\,(t,x)\in(0,T)\times\Omega$ sharp sufficient conditions on degeneration of
$b(t,x)$ near $t=T$ that guarantee mentioned above boundedness for arbitrary (even
large) solution have been found and the sharp upper estimate of a final profile of
solution when $t\rightarrow T$ has been obtained.
\end{abstract}

\subjclass{35K59, 35B44, 35K58, 35K65.}

\keywords{quasilinear parabolic equation, peaking regime, blow-up time, blow-up set,
energy solution.}

\section{Introduction and formulation of main results}

Let $\Omega$ be a bounded domain in ${R}^n$, $n\geq1$
 with $C^2$--smooth boundary $\partial\Omega$.
 We will consider the set of all weak solutions $u(t,x)$ of the following initial
 value problem in the cylindrical domain $Q=(0,T)\times\Omega$,
 $0<T<\infty$:
\begin{equation}\label{equat}
    P_p(u):=(|u|^{p-1}u)_t-\sum_{i=1}^n(a_i(t,x,u,\nabla u))_{x_i}=0 \ \ \ \text{ in }
    Q,\ \ p=\text{const}>0,
\end{equation}
\begin{equation}\label{initial}
u(0,x)=u_0\ \ \text{in} \ \Omega,\ \ u_0\in L^{p+1}(\Omega),
\end{equation}
Here
$a_i(t,x,s,\xi)$, $i=1,2,...,n$, are continuous functions satisfying the following
coercitivity and growth conditions:
\begin{equation}\label{coercit}
d_0|\xi|^{p+1}\leq\sum_{i=1}^na_i(t,x,s,\xi)\xi_i;\ \ \
\forall(t,x,s,\xi)\in\bar{Q}\times{R}^1\times{R}^n;\ d_0=\textrm{const}>0;
\end{equation}
\begin{equation}\label{growth}
|a_i(t,x,s,\xi)|\leq d_1|\xi|^p;\ \ \
\forall(t,x,s,\xi)\in\bar{Q}\times{R}^1\times{R}^n;\ i=1, ..., n;\
d_1=\textrm{const}<\infty.
\end{equation}

\begin{definition}\label{Def1}
We will call function $u(t,x)\in C_{loc}([0,T);L^{p+1}(\Omega))$ a weak (energy)
solution of the problem \eqref{equat}--\eqref{initial} if
\begin{enumerate}
\item[i)] $u(t,\cdot)\in L^{p+1}_{loc}([0,T);W^{1,p+1}(\Omega))$;

\item[ii)] $(|u(t,\cdot)|^{p-1}u(t,\cdot))_t\in L^{\frac{p+1}p}_{loc}
([0,T);(W^{1,p+1}_0(\Omega))^*)$;

\item[iii)] the following integral identity
\begin{equation}\label{ident1}
\int_0^\tau\langle(|u|^{p-1}u)_t,\eta\rangle dx+
\int_0^\tau\int_{\Omega_1}\sum_{i=1}^na_i(t,x,u,\nabla u)\eta_{x_i}dxdt=0
\end{equation}
holds for an arbitrary function $\eta(t,\cdot)\in L^{p+1}((0,\tau);W^{1,p+1}_0(\Omega))$
\ with an arbitrary $\tau<~T$;

\item[iv)] initial condition \eqref{initial} is satisfied.
\end{enumerate}
Here as generally accepted $W^{1,p+1}_0(\Omega)$ \ is a closure in the norm of
$W^{1,p+1}(\Omega)$ \ of a set of smooth functions $f$, such that $f=0$ on
$\Gamma\in\partial\Omega$.
\end{definition}

We consider the set of all weak solutions $u$ of the problem
\eqref{equat}--\eqref{initial} which have $t=T$ as blow-up time in the sense that
\begin{equation}\label{energy}
\begin{aligned}
&E(t)+h(t):=E^{(u)}(t)+h^{(u)}(t):=\\&:=\int_0^t\int_{\Omega}|\nabla_xu(\tau,x)|^{p+1}dxd\tau+
\sup_{0<\tau<t}\int_{\Omega}|u(\tau,x)|^{p+1}dx\rightarrow\infty\ \text{ as
}t\rightarrow T.
\end{aligned}
\end{equation}
Namely for all class of solutions $u$ satisfying the following estimate
\begin{equation}\label{class1}
E^{(u)}(t)+h^{(u)}(t)\leq F(t)\ \ \ \forall t<T
\end{equation}
with an arbitrary prescribed nondecreasing function $F(t):F(t)\rightarrow\infty$ as
$t\rightarrow T$, we have obtained the precise upper energy estimate of a solution $u$
near the blow-up time $T$ depending on $F(t)$. It is worth to mention that the described
unbounded growth of a solution may be generated in various ways, i.e. by boundary regime
with the infinite peaking:
\begin{equation}\label{blow1}
    u(t,x)\Big|_{\partial\Omega}=f(t,x)\rightarrow\infty\ \ \text{ as } t\rightarrow
    T,
\end{equation}
or
\begin{equation}\label{blowN}
\frac{\partial u(t,x)}{\partial
N}\Biggr|_{\partial\Omega}:=\sum_{i=1}^na_i(t,x,u,\nabla_x
u)\nu_i=g(t,x)\rightarrow\infty\ \text{ as }t\rightarrow T.
\end{equation}
where $\nu=\nu(x):=(\nu_1, ..., \nu_n)$ is a unit vector of an outward normal to
$\partial\Omega$ at a point $x$. Using different self-similar solutions or integral
representation of a solution and the barrier techniques, asymptotic and localization
properties of solutions of various linear and quasilinear second order parabolic
equations with boundary peaking regimes \eqref{blow1}, \eqref{blowN} was studied by many
authors (see \cite{SGKM1}, \cite{GH1}, \cite{CE1}, \cite{GG1}, \cite{Ven1} and
references therein).

Another situation with the unbounded growth of solution's energy near $t=T$ occurs when
a solution $u$ of the problem \eqref{equat}--\eqref{initial} is the restriction on
domain $Q$ of a solution $v(t,x)$ of the equation
\begin{equation}\label{equat2}
P_p(v)=\varphi(t,x)\ \ \text{ in
}Q_1=[0,T)\times\Omega_1,\,\overline{\Omega}\subset\Omega_1,
\end{equation}
($P_p(\cdot)$ is from \eqref{equat}) with a boundary condition like \eqref{blow1} or
\eqref{blowN} on $\partial\Omega_1$ with bounded boundary data $f$ or $g$ and initial
condition $v(0,x)=v_0(x)\ \ \ \forall x\in\Omega_1$, where $v_0(x)=u_0(x)$ $\forall
x\in\Omega$ and
\begin{equation}\label{force1}
\text{supp}\,\varphi(t,\cdot)\in\overline{\Omega_1\setminus\Omega}\ \ \ \forall t<T,\
\varphi(t,x)\rightarrow\infty\ \text{ as }t\rightarrow T.
\end{equation}

In \cite{ShSh1} (see also \cite{GSh1}, \cite{GSh2}, \cite{KSSh} and references therein)
  some variant of the local energy estimate method for
the study of the localization of peaking regimes was proposed and developed. This method
does not use any comparison techniques and is applicable for a very large class of
equations, including higher order quasilinear parabolic and pseudoparabolic equations.
Moreover this method comprises all classes of peaking regimes, including \eqref{blow1},
\eqref{blowN} and \eqref{class1}. So, it was proved (see Th.1.1 in \cite{GSh1} and
Th.6.4.1 in \cite{KSSh}) that for an arbitrary solution $u$ of problem \eqref{equat},
\eqref{initial} that satisfies condition \eqref{energy}, \eqref{class1} with
\begin{equation}\label{force2}
F(t)\leq F_0(t):=\exp\big(\omega(T-t)^{-\frac1{p}}\big)\quad
\forall\,t<T,\,\omega=const>0,
\end{equation}
the following property takes place: there exist constants $c>0$, $C<\infty$ depending on
$n,p$ only such that:
\begin{equation}\label{reg1}
\begin{aligned}
&h^{(u)}(t,s)+E^{(u)}(t,s)=h(t,s)+E(t,s):=\int_{\Omega(s)}|u(t,x)|^{p+1}dx+
\\&+\int_0^t\int_{\Omega(s)}|\nabla_xu(\tau,x)|^{p+1}dxd\tau\leq C<\infty\ \quad
\forall\,t<T,\,\forall\,s>c\omega^\frac p{p+1},
\end{aligned}
\end{equation}
where $\Omega(s)=\{x\in\Omega:d(x):=dist(x,\partial\Omega)>s\}$. Thus peaking regime
\eqref{force2} is localized blow-up regime with regional blow-up ("energy" analog of
"point-wise" definition (see \cite{SGKM1})) and with blow-up set
$\Omega_{bl}\subset\Omega\setminus\Omega\left(c\omega^\frac p{p+1}\right)$. For
nonlocalized HS-regimes (regimes \eqref{force2} with $\omega=\omega(t)\rightarrow\infty$
as $t\rightarrow T$) it was shown (see \cite{GSh2}, \cite{KSSh}) that
$\Omega_{bl}=\Omega$ in general case and upper estimates of the propagation of a
corresponding blow-up wave was obtained. For LS-regimes, i.e. regimes \eqref{force2}
with $\omega=\omega(t)\rightarrow0$ as $t\rightarrow T$ (for such regimes
$\Omega_{bl}\subset\partial\Omega$), there is no the precise description of the limiting
profile of a solution $u$ when $t\rightarrow T$ depending on function $\omega(t)$. In
this context we have obtained the following result.
\begin{theorem}\label{Th.1}
    Let $u(t,x)$ be an arbitrary energy solution of the problem
    \eqref{equat}--\eqref{initial} that satisfies the following global energy estimate:
    \begin{equation}\label{global}
        E(t)+h(t)\leq F_\mu(t):=\exp\left(\omega(T-t)^{-\frac1{p+\mu}}\right)\ \ \ \forall t<T,
    \end{equation}
where $E(t)$, $h(t)$ were defined in \eqref{energy}, $\omega>0$, $\mu>0$ are arbitrary
constants. Then there exist constants $c_1<\infty$, $c_2<\infty$, $c_3<\infty$ that
depend on $p,n,d_0,d_1$ only such that the following uniform with respect to $t<T$ a
priori estimate holds:
\begin{equation}\label{main1}
h(t,s)+E(t,s)\leq c_1\exp\big(c_2\omega^\frac{p+\mu}\mu s^{-\frac{p+1}\mu}\big)\ \
\forall\,s:0<s<s'_0:=c_3\min\left(1,\omega^\frac{p+\mu}{p+1}\right),
\end{equation}
where $h(t,s)$, $E(t,s)$ are the energy functions from \eqref{reg1}.
\end{theorem}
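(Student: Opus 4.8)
The plan is to run the local energy estimate method of \cite{ShSh1} (see also \cite{GSh1},\cite{GSh2},\cite{KSSh}), tracking precisely how the exponent $p+\mu$ from \eqref{global} propagates through the iteration. Since $\partial\Omega\in C^{2}$, the distance $d(x)=\mathrm{dist}(x,\partial\Omega)$ is $C^{2}$ with $|\nabla d|=1$ in a one-sided collar of $\partial\Omega$, so for $0<s'<s$ one has admissible cut-offs $\xi=\chi\big((d(x)-s')/(s-s')\big)$, $\chi$ a fixed smooth profile, with $\xi\equiv1$ on $\Omega(s)$, $\mathrm{supp}\,\xi\subset\overline{\Omega(s')}$ and $|\nabla\xi|\le\gamma_{0}(s-s')^{-1}$. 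First I would record the basic \emph{local energy inequality}: fixing $t<T$ and testing the identity \eqref{ident1} with $\eta=u\,\xi^{p+1}$, then using the coercivity \eqref{coercit}, the growth bound \eqref{growth} and Young's inequality to absorb the cross term $\int|a_{i}|\,|u|\,|\nabla\xi|\,\xi^{p}$ into $d_{0}\int|\nabla u|^{p+1}\xi^{p+1}$, one gets
\[
h(t,s)+E(t,s)\le\frac{\gamma_{1}}{(s-s')^{p+1}}\int_{0}^{t}\!\!\int_{\Omega(s')}|u|^{p+1}\,dx\,d\tau+\gamma_{1}\!\int_{\Omega}|u_{0}|^{p+1}\,dx .
\]
A time-cut-off refinement (test with $\eta=u\,\xi^{p+1}\theta(\tau)^{p+1}$, $\theta$ supported in $(\tau_{0},t]$, $\theta\equiv1$ on $[\tau_{1},t]$) produces the same inequality with the space--time integral on the right taken only over $(\tau_{0},t)$, plus a term $\gamma_{1}(\tau_{1}-\tau_{0})^{-1}\!\int_{\tau_{0}}^{\tau_{1}}\!\!\int_{\Omega(s')}|u|^{p+1}$; the energy over $[0,\tau_{0}]$ is then controlled directly by $F_{\mu}(\tau_{0})$ via \eqref{global}. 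This refinement is what lets the iteration below stay uniform in $t$.

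Next I would turn the right-hand side into a closed recursion. Using a (parabolic) Gagliardo--Nirenberg--Sobolev interpolation --- the cut-off supplies the missing boundary values, and one interpolates $L^{p+1}$ between the Sobolev target $L^{q^{*}}$, $q^{*}>p+1$, and a lower Lebesgue norm, integrating in time --- one expects a bound of the form
\[
\int_{\tau_{0}}^{t}\!\!\int_{\Omega(s')}|u|^{p+1}\,dx\,d\tau\le\gamma_{2}\,(t-\tau_{0})^{\varkappa}\big(h(t,s')+E(t,s')\big)^{1+\nu}
\]
with $\varkappa>0$, $\nu>0$ depending only on $n,p$. Feeding this back into the local energy inequality and introducing a geometric family of radii $s=\sigma_{0}>\sigma_{1}>\sigma_{2}>\cdots$ with $\sum_{k}(\sigma_{k}-\sigma_{k+1})\le s$, together with a matching sequence of shrinking time windows, yields a superlinear recursion $I_{k}\le\Lambda_{k}I_{k+1}^{1+\nu}+R_{k}$, where $I_{k}$ is the local energy on $\Omega(\sigma_{k})$ over the $k$-th window, $\Lambda_{k}\sim\gamma_{2}^{k}(\sigma_{k}-\sigma_{k+1})^{-(p+1)}$, and the remainder $R_{k}$ is governed by $F_{\mu}$ at the lower end of the $k$-th window through \eqref{global}.

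Finally comes the optimization, which I expect to be the real work. Unrolling the superlinear recursion, $\ln I_{0}$ is dominated by a series in which each $\ln\Lambda_{k}$ enters with weight $(1+\nu)^{-k}$, so $\prod_{k}\Lambda_{k}^{(1+\nu)^{-k}}$ is finite; the base of the iteration is tamed by choosing the time windows so that the $F_{\mu}$-factor they carry stays bounded uniformly in $t$. Balancing the spatial layer widths $\sigma_{k}-\sigma_{k+1}$ against the window lengths should give
\[
\ln\big(h(t,s)+E(t,s)\big)\le c_{2}\,\omega^{(p+\mu)/\mu}\,s^{-(p+1)/\mu}+\ln c_{1}
\]
uniformly in $t<T$, the same balance dictating the admissible range $0<s<s_{0}'=c_{3}\min\big(1,\omega^{(p+\mu)/(p+1)}\big)$ (below which the base of the iteration cannot be reached); for $t$ bounded away from $T$ the estimate is immediate from \eqref{global}. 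The main obstacle is exactly this coupling of two scales: one must pick the spatial and temporal layer sizes so that, simultaneously, the accumulated constants remain $\le\exp\big(c_{2}\omega^{(p+\mu)/\mu}s^{-(p+1)/\mu}\big)$ and the a priori growth $F_{\mu}$ entering at the bottom of the iteration is defeated uniformly in $t$ --- and, beyond merely getting \emph{some} power of $s$ in the exponent, obtaining the exponents $(p+\mu)/\mu$ and $(p+1)/\mu$ precisely. Each individual step (cut-off, Young, interpolation) is routine once this scheme is in place.
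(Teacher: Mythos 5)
Your scheme diverges from the paper's at the decisive step, and I do not think it can be made to work as written. The engine you propose is a De Giorgi/Diaz--Veron--type \emph{superlinear} recursion $I_k\le\Lambda_k I_{k+1}^{1+\nu}+R_k$. Two problems. First, the interpolation inequality $\int\!\!\int|u|^{p+1}\le\gamma_2(t-\tau_0)^{\varkappa}(h+E)^{1+\nu}$ with $\nu>0$ is not available here: a superlinear gain of this kind requires an a priori bound on a \emph{lower}-order norm of $u$ (that is what gets absorbed in the dead-core/vanishing arguments), and no such bound exists for a solution whose every norm blows up like $F_\mu$. Second, and more fundamentally, even granting the recursion, it points the wrong way: unrolling $I_0\le\Lambda_0 I_1^{1+\nu}\le\Lambda_0\Lambda_1^{1+\nu}I_2^{(1+\nu)^2}\le\cdots$ puts the weight $(1+\nu)^{k}$ (not $(1+\nu)^{-k}$, as you assert) on $\ln\Lambda_k$ and, worse, the exponent $(1+\nu)^{k}$ on the base term $I_k\le F_\mu(t)\to\infty$, so the product explodes. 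Superlinear iterations help when the iterated quantity is small; here it is large. Your cut-off inequality also produces only a polynomial factor $(s-s')^{-(p+1)}$, which cannot by itself generate the exponential-in-$s^{-(p+1)/\mu}$ profile of \eqref{main1}.

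What actually defeats the exponential growth in the paper is a different mechanism: a \emph{first-order linear} ordinary differential inequality in the space variable, $E\le C_1h+C_2\Delta^{1/(p+1)}\bigl(-\tfrac{d}{ds}E\bigr)$ (Lemmas \ref{Lem1}--\ref{Lem2}), whose solutions decay like $\exp\bigl(-cs\,\Delta^{-1/(p+1)}\bigr)$, coupled with a time partition $\Delta_j\sim\omega^{p+\mu}(j+L)^{-(p+\mu+1)}$ tuned so that $F_\mu(t_j)\approx e^{\eta(j+L)}$; balancing the decay $\exp\bigl(-cs\Delta_j^{-1/(p+1)}\bigr)$ against the growth $e^{\eta(j+L)}$ is precisely what yields the exponent $\omega^{(p+\mu)/\mu}s^{-(p+1)/\mu}$ (Lemma \ref{Lem3}). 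Even then one round of this argument only replaces $\omega$ by $\zeta\omega$, $\zeta<1$; the paper needs infinitely many rounds with geometrically shrinking spatial shifts to pass from the exponential bound to a power-like bound $\alpha_3(T-t)^{-\alpha_4}D(s)$ (Section 3), and a further iteration with energy-adapted time shifts plus a Stampacchia lemma to reach the finite bound \eqref{main1} (Section 4). None of these ingredients is present in, or recoverable from, your sketch; the ``optimization'' you defer to is exactly where the proof lives.
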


{\bf Conjecture.} Estimate \eqref{main1} is sharp with respect to parameters $\omega$,
$\mu$.

Next we will demonstrate the application of Theorem \ref{Th.1} for the study of
asymptotic properties of large solutions of a some class of quasilinear parabolic
equations of diffusion -- nonlinear degenerate absorption type. Namely let us consider
the problem:
\begin{equation}\label{da1}
P_p(u)=-b(t,x)|u|^{\lambda-1}u\ \ \text{ in }Q=(0,T)\times\Omega,\ T<\infty,\
\lambda>p>0,
\end{equation}
\begin{equation}\label{bound1}
u=\infty\ \ \ \ \text{ on }(0,T)\times\partial\Omega,
\end{equation}
\begin{equation}\label{initial1}
u=\infty\ \ \ \ \text{ on }\{0\}\times\Omega,
\end{equation}
where $\Omega$ is a domain from Theorem \ref{Th.1} and $b(t,x)$ (the absorption
potential) is a continuous function in $[0,T]\times\overline{\Omega}$ satisfying the
following condition:
\begin{equation}\label{abs1}
b(t,x)>0\ \ \text{ in }[0,T)\times\overline{\Omega},\ \ b(t,x)=0\ \ \text{ on
}\{T\}\times\Omega.
\end{equation}
If $p=1$ and $a_i(t,x,u,\xi)=\xi_i$, $i=1, ..., n$ then under the condition
\begin{equation}\label{abs2}
a_1(t)d(x)^\beta\leq b(t,x)\leq a_2(t)d(x)^\beta\ \ \ \forall (t,x)\in
[0,T]\times\Omega,\ \beta>-2,
\end{equation}
where $a_1(t)$, $a_2(t)$ is positive continuous on $[0,T)$ functions, the existence of
maximal $\overline{u}$ and minimal $\underline{u}$ positive solutions of the problem
\eqref{da1}, \eqref{bound1}, \eqref{initial1} was proved in \cite{DPP1}. Moreover the
main result of \cite{DPP1} says that under the following additional condition on the
degeneration of $a_1(t)$ near $t=T$:
\begin{equation}\label{abs3}
a_1(t)\geq c_0(T-t)^\theta\ \ \text{ in }[0,T),\ c_0=const>0,\ \theta=const>0
\end{equation}
for any $t_0\in(0,T)$ there exists $C=C(t_0)<\infty$ such that:
\begin{equation}\label{estim1}
\overline{u}(t,x)\leq
C\min\big\{(T-t)^{-\frac\theta{\lambda-1}},d(x)^{-\frac{2\theta}{\lambda-1}}\big\}d(x)^{-\frac{2+\beta}{\lambda-1}}\
\ \forall(t,x)\in[t_0,T)\times\Omega.
\end{equation}
In \cite{Sh1} the sharp sufficient flatness condition for $a_1(t)$ that guarantees the
boundedness of $\limsup_{t\rightarrow T}u(t,x)$ $\forall x\in\Omega$ for an arbitrary
solution $u$ of the problem under consideration was found. Namely it was proved that the
condition
\begin{equation}\label{abs4}
a_1(t)\geq c_0\exp\biggr(-\frac{\omega_0}{(T-t)}\biggr)\ \ \text{ in }[0,T),\
c_0=const>0,\ \omega_0=const>0
\end{equation}
guarantees the existence of a constant $k>0$, that does not depend on $\omega_0$ such
that
\begin{equation}\label{estim2}
\limsup_{t\rightarrow T}u(t,x)\leq C<\infty\ \ \ \forall
x\in{\Omega}_0:=\{x\in\Omega:d(x)>k\omega_0^\frac12\}.
\end{equation}
Here we prove the following statement.
\begin{theorem}\label{Th.2}
Let $u$ be an arbitrary weak (energy) solution (see definition \ref{def2}) of equation
\eqref{da1}, where the absorption potential $b(t,x)$ satisfies the condition
\begin{equation}\label{abs3}
a_1(t)g_1(d(x))\leq b(t,x)\leq a_2(t)g_2(d(x))\qquad \forall\,(t,x)\in
[0,T)\times\Omega.
\end{equation}
Here $g_1(s)\leq g_2(s)$ are arbitrary nondecreasing positive for all $s>0$ functions
and $a_1(t)$ satisfies the condition:
\begin{equation}\label{abs5}
a_1(t)\geq c_0\exp \left(-\omega(T-t)^{-\frac1{p+\mu}}\right)\quad
\forall\,t<T,\,c_0>0,\,\omega=const>0,\,\mu=const>0,
\end{equation}
Then the following estimate holds for all $\frac T2<t<T$:
\begin{equation}\label{bound08}
\begin{aligned}
&h(t,s)+E(t,s):=\int_{\Omega(s)}|u(t,x)|^{p+1}dx+\int_{\frac T2}^t\int_{\Omega(s)}
|\nabla_xu(\tau,x)|^{p+1}dxd\tau\leq
\\&\leq
K_1\min_{0<\bar{s}<s}\left\{\exp\left(K_2\omega^\frac{p+\mu}\mu(s-\bar{s})^{-\frac{p+1}\mu}\right)\cdot
\left(\int_0^{\bar{s}}g_1(h)^\frac{p+1}{1+p(\lambda+2)}dh\right)^{-\frac{1+p(\lambda+2)}{\lambda-p}}\right\}\quad\forall\,s\in(0,s'_0),
\end{aligned}
\end{equation}
where constants $K_1<\infty$, $K_2<\infty$ depend on known parameters of the problem
under consideration only, $s'_0$ is from \eqref{main1}.
\end{theorem}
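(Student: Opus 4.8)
The idea is first to use the good sign of the absorption term to obtain a \emph{finite} global energy bound for $u$ on the interior subdomain $\Omega(\bar s)$ — of exactly the type \eqref{global} — and then to apply Theorem~\ref{Th.1} on $\Omega(\bar s)$. Fix $\bar s\in(0,s'_0)$ and a Lipschitz cut-off $\xi=\xi(d(x))$, nondecreasing in $d(x)$, with $\xi(0)=0$ and $\xi\equiv1$ on $\Omega(\bar s)$ (so $\nabla\xi$ is supported in $\{0<d(x)<\bar s\}$). Testing the weak form of \eqref{da1} on $(T/2,t)\times\Omega$ with $\eta=u\,\xi^{\alpha}$ for large $\alpha$ — which, since $u$ is infinite on $\partial\Omega$, is done by first truncating $u\mapsto T_k(u)$ and letting $k\to\infty$, the passage being justified by the standard barrier bound for $u(t,\cdot)$ near $\partial\Omega$ — and using \eqref{coercit}, \eqref{growth} together with Young's inequality, one obtains
\[
\int_{\Omega}\xi^{\alpha}|u(t)|^{p+1}+\int_{T/2}^{t}\!\!\int_{\Omega}\xi^{\alpha}|\nabla u|^{p+1}+\int_{T/2}^{t}\!\!\int_{\Omega}b\,\xi^{\alpha}|u|^{\lambda+1}\le C\!\int_{\Omega}\xi^{\alpha}|u(\tfrac T2)|^{p+1}+C\!\int_{T/2}^{t}\!\!\int_{\Omega}|u|^{p+1}\xi^{\alpha-p-1}|\nabla\xi|^{p+1}
\]
with $C=C(p,n,d_0,d_1)$. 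In the last term write $|u|^{p+1}=(b|u|^{\lambda+1})^{\frac{p+1}{\lambda+1}}b^{-\frac{p+1}{\lambda+1}}$ and apply Young's inequality with exponents $\tfrac{\lambda+1}{p+1},\tfrac{\lambda+1}{\lambda-p}$; absorbing a fraction of the absorption integral on the left and using $b(t,x)\ge a_1(t)g_1(d(x))$ (with $a_1$ as in \eqref{abs5}), the $t$- and $x$-integrals separate and
\[
h(t,\bar s)+E(t,\bar s)\le C\!\int_{\Omega}\xi^{\alpha}|u(\tfrac T2)|^{p+1}+C\Big(\int_{T/2}^{t}\!\!a_1(\tau)^{-\frac{p+1}{\lambda-p}}\,d\tau\Big)\int_{\Omega}g_1(d)^{-\frac{p+1}{\lambda-p}}\xi^{\alpha-A}|\nabla\xi|^{A}\,dx,\qquad A:=\tfrac{(p+1)(\lambda+1)}{\lambda-p}.
\]

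It is worth noting that the \emph{unweighted} test function $u\,\xi^\alpha$ (rather than some $|u|^{\gamma-1}u\,\xi^\alpha$) is precisely the choice matching the target exponent. Next, minimise the $x$-integral over admissible cut-offs: by the coarea formula it equals, up to a constant, $\int_0^{\bar s}g_1(h)^{-\frac{p+1}{\lambda-p}}\xi(h)^{\alpha-A}\xi'(h)^{A}\,dh$ subject to $\xi(0)=0,\ \xi(\bar s)=1$, and the substitution $v=\xi^{\alpha/A}$ reduces it to the convex one-dimensional problem $\min\int_0^{\bar s}g_1^{-\frac{p+1}{\lambda-p}}(v')^{A}\,dh$. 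Its Euler--Lagrange equation gives $v'\propto g_1^{\frac{p+1}{(\lambda-p)(A-1)}}$ and minimal value $\propto\big(\int_0^{\bar s}g_1(h)^{\frac{p+1}{(\lambda-p)(A-1)}}dh\big)^{-(A-1)}$; since $A-1=\frac{1+p(\lambda+2)}{\lambda-p}$ this is exactly $c\,\Phi(\bar s)$ with $\Phi(\bar s):=\big(\int_0^{\bar s}g_1(h)^{\frac{p+1}{1+p(\lambda+2)}}dh\big)^{-\frac{1+p(\lambda+2)}{\lambda-p}}$. Combining this with $\int_{T/2}^{t}a_1(\tau)^{-\frac{p+1}{\lambda-p}}d\tau\le C\exp\!\big(\tfrac{p+1}{\lambda-p}\omega(T-t)^{-\frac1{p+\mu}}\big)$ coming from \eqref{abs5}, and with the a priori bound $\int_{\Omega}\xi^\alpha|u(T/2)|^{p+1}\le C$ (a consequence of the barrier estimate for $u(T/2,\cdot)$ near $\partial\Omega$, valid for the maximal solution and hence for any energy solution), one arrives at
\[
h(t,\bar s)+E(t,\bar s)\le C\,\Phi(\bar s)\exp\!\big(\sigma\,\omega(T-t)^{-\frac1{p+\mu}}\big)=:\Phi(\bar s)\,\widetilde F(t),\qquad \sigma=\sigma(p,\lambda)>0,\ \tfrac T2<t<T,
\]
i.e.\ on $\Omega(\bar s)$ the solution $u$ obeys a global energy estimate of precisely the form \eqref{global} (with $\omega$ replaced by $\sigma\omega$, up to the harmless constant factor $C\Phi(\bar s)$).

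Now rescale: $v:=(C\Phi(\bar s))^{-1/(p+1)}u$ solves on the smooth bounded domain $\Omega(\bar s)$ an equation of the form \eqref{da1} with the same structure constants $d_0,d_1$ and a nonnegative absorption potential, and $E^{(v)}(t)+h^{(v)}(t)\le\widetilde F(t)$. For $t$ close enough to $T$ the constant factor is absorbed, $\widetilde F(t)\le\exp\!\big(2\sigma\omega(T-t)^{-1/(p+\mu)}\big)$, so Theorem~\ref{Th.1} applies to $v$ in $\Omega(\bar s)$ with distance parameter $s-\bar s$ — note that $\Omega(s)\subset\{x\in\Omega(\bar s):dist(x,\partial\Omega(\bar s))>s-\bar s\}$ because $d(\cdot)$ is $1$-Lipschitz, and that the good-sign absorption term does not affect the proof of Theorem~\ref{Th.1}, which uses only a differential inequality of the correct sign. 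Undoing the scaling,
\[
h(t,s)+E(t,s)\le C\,\Phi(\bar s)\,c_1\exp\!\big(c_2(2\sigma\omega)^{\frac{p+\mu}{\mu}}(s-\bar s)^{-\frac{p+1}{\mu}}\big),
\]
while for $t$ bounded away from $T$ the same quantity is $\le h(t,\bar s)+E(t,\bar s)\le\Phi(\bar s)\widetilde F(t)\le C\,\Phi(\bar s)$ on that range. Choosing $K_1,K_2$ accordingly and taking the infimum over $\bar s\in(0,s)$ yields \eqref{bound08}.

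\smallskip
\noindent\textbf{Main obstacle.} The computational core — checking that the naive power $\gamma=1$ produces the exponent $\tfrac{p+1}{1+p(\lambda+2)}$, and then solving the optimal-cut-off ODE — is just bookkeeping of Young's inequalities plus a one-line variational computation. The genuine difficulties are the two admissibility issues: legitimising $u\,\xi^{\alpha}$ as a test function up to the blow-up boundary (requiring the truncate-and-pass-to-the-limit device together with a preliminary interior-finiteness statement and a barrier bound near $\partial\Omega$), and controlling the data term $\int_{\Omega}\xi^\alpha|u(T/2,\cdot)|^{p+1}$ uniformly — both of which rest on the comparison/large-solution machinery behind \cite{DPP1}, \cite{Sh1}. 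Once these are in place, the rest follows the local-energy scheme underlying Theorem~\ref{Th.1}.
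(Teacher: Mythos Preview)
Your two-step scheme---derive on $\Omega(\bar s)$ a global energy bound of type \eqref{global}, then feed it into Theorem~\ref{Th.1}---is exactly the paper's plan, and your execution of step two (good sign of the absorption in the basic energy relation, rescaling by $\Phi(\bar s)^{1/(p+1)}$, the inclusion $\Omega(s)\subset\{\mathrm{dist}(\cdot,\partial\Omega(\bar s))>s-\bar s\}$, final optimisation in $\bar s$) coincides with \eqref{4.16}--\eqref{4.18}. The difference is entirely in how step one is obtained.

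There is a genuine gap in your step one, and it is precisely what you flag as the ``Main obstacle.'' Both the admissibility of $\eta=u\,\xi^\alpha$ (your $\xi$ vanishes only \emph{at} $\partial\Omega$, whereas Definition~\ref{def2} demands compactly supported test functions) and the uniform control of $\int_\Omega\xi^\alpha|u(T/2)|^{p+1}$ are delegated to comparison/barrier arguments from \cite{DPP1},\,\cite{Sh1}. But the operator $P_p$ in \eqref{equat} carries only the structure conditions \eqref{coercit},\,\eqref{growth}; no monotonicity in $\nabla u$ is assumed, so no comparison principle is available, and indeed the paper's declared point (see the Introduction) is that its energy method ``does not use any comparison techniques.'' Without comparison the data term is merely finite (by $u\in C_{loc}((0,T);L^{p+1}(\Omega))$) but solution-dependent, so your $K_1$ would depend on $u$, contrary to the statement. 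Your variational cut-off calculation is correct and neatly explains the exponent $\tfrac{p+1}{1+p(\lambda+2)}$, but it does not close the argument in this generality.

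The paper bypasses the data term entirely by working on the \emph{slanted} cylinders $Q_\tau(s)=(s^q,\tau)\times\Omega(s)$ with $1<q<1+\tfrac{p(\lambda+1)}{p+1}$, so that the lower time limit moves with $s$. Testing with $u\,\xi_{s,\delta}(d(x))$ (genuinely compactly supported, since $\xi_{s,\delta}=0$ for $d\le s$) gives \eqref{4.7}; the ``initial'' term $h_{s^q}(s)=\int_{\Omega(s)}|u(s^q,x)|^{p+1}dx$ is then controlled, via H\"older and the key observation \eqref{4.10} that differentiating $\int_{s^q}^\tau\int_{\Omega(s)}$ in $s$ produces a boundary contribution at $t=s^q$, by a power of $-\tfrac{d}{ds}E_\tau(s)$. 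This yields the closed ODI \eqref{4.11} for $B_\tau(s)=h_\tau(s)+E_\tau(s)$, and comparison with the explicit solution of $B'=-H^{(1)}_\tau(s,B)$ with $B(0)=+\infty$ gives the universal bound \eqref{4.12}, namely $B_\tau(s)\le C_3\Phi(\tau)G_1(s)$ with exactly your $G_1(s)=\big(\int_0^s g_1^{(p+1)/(1+p(\lambda+2))}\big)^{-(1+p(\lambda+2))/(\lambda-p)}$. Thus the same interior factor emerges, but via an ODI-from-infinity rather than an optimal cut-off, and with no reference to the value of $u$ at any fixed positive time or to its boundary behaviour.
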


\begin{corollary}
Let $g_1(s)=as^\nu$, $a>0$, $\nu>0$. Then estimate \eqref{bound08} yields (see example 2
in \S 5):
\begin{equation}\label{21.06}
\begin{aligned}
&h(t,s)+E(t,s)\leq K_3 \exp\left(K_2(1-\rho)^{-\frac{p+1}\mu}\omega^\frac{p+\mu}\mu
s^{-\frac{p+1}\mu}\right)\rho^{-b} s^{-b}\quad\forall\,t\in\left(\frac
T2,T\right),\,\forall\,s\in(0,s'_0),\\& \text{where }\
b=\frac{(\nu+1)(p+1)+p(\lambda+1)}{\lambda-p},\quad
K_3=K_3(a,\nu)=K_1a^{-\frac{p+1}{\lambda-p}}
\left(1+\frac{\nu(p+1)}{1+p(\lambda+2)}\right)^\frac{1+p(\lambda+2)}{\lambda-p}.
\end{aligned}
\end{equation}
Here $\rho=\rho\left(\frac{\mu
bs^\frac{p+1}\mu}{K_2\omega^{\frac{p+\mu}\mu}(p+1)}\right)$, where $\rho(\tau)$ is a
function defined by an optimizing condition. Namely this function satisfies the
following equation:
\begin{equation*}
\rho-\tau(1-\rho)^{\frac{p+\mu+1}\mu}=0,\quad\forall\tau>0.
\end{equation*}
It is easy to see that $\rho(\cdot):(0,\infty)\rightarrow(0,1)$ is a monotonically
increasing function. Moreover, $\rho(\tau)\tau^{-1}\rightarrow1$ as $\tau\rightarrow0$.
\end{corollary}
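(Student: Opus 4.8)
The plan is to specialize the general bound \eqref{bound08} to the monomial weight $g_1(s)=as^\nu$ and then carry out the one–dimensional optimization in the scale parameter $\bar s$ explicitly. First I would evaluate the inner integral: since $\nu>0$,
\[
\int_0^{\bar s}g_1(h)^{\frac{p+1}{1+p(\lambda+2)}}\,dh=a^{\frac{p+1}{1+p(\lambda+2)}}\int_0^{\bar s}h^{\frac{\nu(p+1)}{1+p(\lambda+2)}}\,dh=a^{\frac{p+1}{1+p(\lambda+2)}}\Big(1+\tfrac{\nu(p+1)}{1+p(\lambda+2)}\Big)^{-1}\bar s^{\,1+\frac{\nu(p+1)}{1+p(\lambda+2)}}.
\]
Raising this to the power $-\tfrac{1+p(\lambda+2)}{\lambda-p}$ and collecting the $a$–factor and the numerical factor produces exactly $K_3=K_3(a,\nu)=K_1a^{-\frac{p+1}{\lambda-p}}\big(1+\tfrac{\nu(p+1)}{1+p(\lambda+2)}\big)^{\frac{1+p(\lambda+2)}{\lambda-p}}$, while the exponent of $\bar s$ becomes $-\big(1+\tfrac{\nu(p+1)}{1+p(\lambda+2)}\big)\tfrac{1+p(\lambda+2)}{\lambda-p}=-\tfrac{\nu(p+1)+1+p(\lambda+2)}{\lambda-p}=-b$; the last identity is just the expansion $(\nu+1)(p+1)+p(\lambda+1)=\nu(p+1)+1+p(\lambda+2)$. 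Hence \eqref{bound08} becomes, for $s\in(0,s'_0)$ and $\tfrac T2<t<T$,
\[
h(t,s)+E(t,s)\le K_3\min_{0<\bar s<s}\Big\{\exp\!\big(K_2\omega^\frac{p+\mu}\mu(s-\bar s)^{-\frac{p+1}\mu}\big)\,\bar s^{-b}\Big\}.
\]

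Next I would introduce the scale–invariant variable $\bar s=\rho s$ with $\rho\in(0,1)$, so that $s-\bar s=(1-\rho)s$ and the bracketed quantity equals $\exp\big(K_2\omega^\frac{p+\mu}\mu(1-\rho)^{-\frac{p+1}\mu}s^{-\frac{p+1}\mu}\big)\,\rho^{-b}s^{-b}$. Since the factor $s^{-b}$ and the value of $t$ do not depend on $\rho$, minimizing over $\bar s$ amounts to minimizing over $\rho\in(0,1)$ the function $\psi(\rho):=K_2\omega^\frac{p+\mu}\mu s^{-\frac{p+1}\mu}(1-\rho)^{-\frac{p+1}\mu}-b\ln\rho$. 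As $\psi(\rho)\to+\infty$ both as $\rho\to0^+$ (from $-b\ln\rho$, recall $b>0$ since $\lambda>p$) and as $\rho\to1^-$, and as $\psi'$ is strictly increasing on $(0,1)$, $\psi$ has a unique interior critical point which is its global minimum. The equation $\psi'(\rho)=0$ reads
\[
K_2\omega^\frac{p+\mu}\mu s^{-\frac{p+1}\mu}\,\tfrac{p+1}\mu\,(1-\rho)^{-\frac{p+\mu+1}\mu}=\frac{b}{\rho}\quad\Longleftrightarrow\quad \rho(1-\rho)^{-\frac{p+\mu+1}\mu}=\frac{\mu b\,s^{\frac{p+1}\mu}}{K_2\omega^\frac{p+\mu}\mu(p+1)}=:\tau,
\]
i.e. $\rho$ solves $\rho-\tau(1-\rho)^{\frac{p+\mu+1}\mu}=0$, which is precisely the definition of $\rho(\tau)$ in the statement with the indicated argument. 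Substituting this particular $\bar s=\rho(\tau)s$ back into the previous display (it suffices that it be an admissible competitor for the minimum) yields \eqref{21.06}.

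It remains to record the two asserted properties of $\rho$. Put $G(\rho):=\rho(1-\rho)^{-\frac{p+\mu+1}\mu}$ on $(0,1)$; then $G(0^+)=0$, $G(1^-)=+\infty$, and $G'(\rho)=(1-\rho)^{-\frac{p+\mu+1}\mu}+\rho\,\tfrac{p+\mu+1}\mu(1-\rho)^{-\frac{p+\mu+1}\mu-1}>0$, so $G$ is a strictly increasing bijection of $(0,1)$ onto $(0,\infty)$; its inverse $\rho(\cdot):(0,\infty)\to(0,1)$ is therefore strictly increasing. For the asymptotics near $0$, the defining identity $\rho=\tau(1-\rho)^{\frac{p+\mu+1}\mu}$ gives $\rho(\tau)\,\tau^{-1}=(1-\rho(\tau))^{\frac{p+\mu+1}\mu}\to1$ as $\tau\to0^+$, since $\rho(\tau)\to0$ there.

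No step presents a genuine difficulty; the only care required is the bookkeeping of exponents that identifies the constant and the power of $s$ with $K_3$ and $b$, and the elementary verification that $\psi$ is coercive on $(0,1)$ so that the stationarity value is indeed the minimum — although for the stated inequality \eqref{21.06} alone it would be enough to insert any $\rho\in(0,1)$.
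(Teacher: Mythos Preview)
Your proposal is correct and follows essentially the same route as the paper: compute the inner integral for the monomial $g_1$ (this is exactly the paper's Example~2, yielding $G_1(\bar s)=K_3K_1^{-1}\bar s^{-b}$), substitute into \eqref{bound08}, and then carry out the one-parameter optimization via the change $\bar s=\rho s$; the paper merely states the optimizing equation while you supply the calculus details (coercivity and strict convexity of $\psi$, the bijectivity of $G(\rho)=\rho(1-\rho)^{-(p+\mu+1)/\mu}$, and the $\tau\to0$ asymptotics), all of which are straightforward and correctly done.
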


\begin{remark}\label{Rem2}
In the forthcoming paper we are going to consider the problem
\eqref{da1}--\eqref{initial1} under the condition that the absorption potential $b(t,x)$
degenerates on some manifold $\Gamma\subset\overline{\Omega}:b(t,x)\rightarrow0$ as
$t\rightarrow T,\,x\in\Gamma;\,b(t,x)>0\,\forall\,t\leq
T,\,\forall\,x\in\Omega\setminus\Gamma$. We are going to describe the propagation of
singularities of a large solution along $\Gamma$ and obtain sharp estimates of the
limiting profile of a solution near $t=T$ depending on the asymptotic of $b(t,x)$ near
$\{T\}\times\Gamma$.
\end{remark}

\section{Systems of differential inequalities with respect to the families of energy functions.}

Let us introduce the following families of subdomains of the domains $\Omega$, $Q$ from
\eqref{equat}, \eqref{initial}:
\begin{equation}\label{fam1}
    \Omega(s):=\{x\in\Omega:d(x)>s\},\ \ Q(s):=(0,T)\times\Omega(s)\ \ \ \forall\,s:0<s<s_\Omega,
\end{equation}
where $s_\Omega>0$ is such a constant that function $d(\cdot)\in
C^2(\Omega\setminus\Omega(s))$ $\forall s\leq s_\Omega$ and correspondingly
$\partial\Omega(s)$ is $C^2$--smooth manifold for all $0<s\leq s_\Omega$. As is well
known, the existence of such a constant follows from the prescribed smoothness of
$\partial\Omega$. Let us introduce  the following energy functions for arbitrary
$a,b:0\leq a<b<T$ connected with solution $u$ under consideration:
\begin{equation}\label{energy1}
\begin{aligned}
&E_a^{(b)}(s):=\int_{Q_{a}^{(b)}(s)}|\nabla_xu(t,x)|^{p+1}dxdt\quad
Q_{a}^{(b)}(s):=\left\{(t,x)\in(a,b)\times\Omega(s)\right\},
\\&h_a^{(b)}(s):=\sup_{a\leq t<b}\int_{\Omega(s)}|u(t,x)|^{p+1}dx\ \ \ \forall s\in(0,s_\Omega).
\end{aligned}
\end{equation}

\begin{lemma}\label{Lem1}
Let $u(t,x)$ be an arbitrary energy solution of the problem
\eqref{equat}--\eqref{initial}. Then there exist constants $k_1<\infty$, $k_2<\infty$
that don't depend on $s,a,b$ such that for arbitrary $a,b:0\leq a<b<T$ and almost all
$s\in(0,s_\Omega)$ the following inequality holds:
\begin{equation}\label{energy2}
\begin{aligned}
&h(b,s)+\frac{d_0(p+1)}pE_a^{(b)}(s)\leq h(a,s)+
\biggr(\int_a^b\int_{\partial\Omega(s)}|\nabla_xu(t,x)|^{p+1}d\sigma dt\biggr)^\frac
p{p+1}\times \\&\times\biggr[k_1\int_a^bh(t,s)dt+
k_2\big(E_a^{(b)}(s)\big)^\theta\Big(\int_a^bh(t,s)dt\Big)^{1-\theta}\biggr]^\frac1{p+1},
\quad\text{where }\theta=(p+1)^{-1}.
\end{aligned}
\end{equation}
\end{lemma}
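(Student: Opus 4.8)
The plan is to derive the energy identity by testing the weak formulation \eqref{ident1} with a suitable truncated version of $u$ itself, localized to the subdomain $\Omega(s)$, and then control the resulting boundary term using the coercitivity \eqref{coercit} and growth \eqref{growth} conditions together with a trace/interpolation inequality. Concretely, I would fix $s\in(0,s_\Omega)$ such that $\partial\Omega(s)$ is $C^2$-smooth and choose as test function $\eta=u\cdot\chi_{\Omega(s)}$ (made admissible by a standard approximation argument, using that $u(t,\cdot)\in W^{1,p+1}(\Omega)$ and items i)--ii) of Definition \ref{Def1}, and by integrating over $(a,b)$ rather than $(0,\tau)$). The term $\int_a^b\langle(|u|^{p-1}u)_t,u\chi_{\Omega(s)}\rangle\,dt$ integrates in $t$ to $\tfrac{p}{p+1}\bigl(\int_{\Omega(s)}|u(b,x)|^{p+1}dx-\int_{\Omega(s)}|u(a,x)|^{p+1}dx\bigr)$ by the chain rule for the nonlinear time derivative; after multiplying through by $\tfrac{p+1}{p}$ this yields the $h(b,s)$ and $-h(a,s)$ contributions. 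Note that on the left one actually gets $\int_{\Omega(s)}|u(b,x)|^{p+1}dx$, which is $\le h(b,s)$, and on the right $\int_{\Omega(s)}|u(a,x)|^{p+1}dx\le h(a,s)$, which is the direction needed.

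Next I would treat the diffusion term $\int_a^b\int_{\Omega(s)}\sum_i a_i(t,x,u,\nabla u)(u\chi_{\Omega(s)})_{x_i}\,dxdt$. Since the cutoff is sharp, differentiating $u\chi_{\Omega(s)}$ produces a bulk part $\int_a^b\int_{\Omega(s)}\sum_i a_i\,u_{x_i}\,dxdt$ and a singular boundary part living on $\partial\Omega(s)$. The bulk part is bounded below by $d_0\int_a^b\int_{\Omega(s)}|\nabla u|^{p+1}=d_0E_a^{(b)}(s)$ via \eqref{coercit}. For the boundary part, a cleaner route is to avoid distributional derivatives of $\chi_{\Omega(s)}$ altogether: instead test with $u\,\zeta_\varepsilon$ where $\zeta_\varepsilon$ is a Lipschitz cutoff equal to $1$ on $\Omega(s)$, $0$ outside $\Omega(s-\varepsilon)$, linear in $d(x)$ in between, and then let $\varepsilon\to0$. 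The gradient term $\int\sum_i a_i\,u\,(\zeta_\varepsilon)_{x_i}$ then converges, after using $|(\zeta_\varepsilon)_{x_i}|\le \varepsilon^{-1}|\nabla d|$ and the coarea formula, to a surface integral $\int_a^b\int_{\partial\Omega(s)}\sum_i a_i(t,x,u,\nabla u)\,u\,\nu_i\,d\sigma\,dt$, whose modulus is $\le d_1\int_a^b\int_{\partial\Omega(s)}|\nabla u|^{p}|u|\,d\sigma\,dt$ by \eqref{growth}. Applying Hölder on $\partial\Omega(s)$ with exponents $\tfrac{p+1}{p}$ and $p+1$ gives $\le d_1\bigl(\int_a^b\int_{\partial\Omega(s)}|\nabla u|^{p+1}d\sigma dt\bigr)^{p/(p+1)}\bigl(\int_a^b\int_{\partial\Omega(s)}|u|^{p+1}d\sigma dt\bigr)^{1/(p+1)}$, which already exhibits the first factor of \eqref{energy2}.

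The remaining task is to bound the boundary integral $\int_a^b\int_{\partial\Omega(s)}|u|^{p+1}d\sigma\,dt$ by the bracketed expression $k_1\int_a^b h(t,s)\,dt+k_2(E_a^{(b)}(s))^{\theta}(\int_a^b h(t,s)\,dt)^{1-\theta}$ with $\theta=(p+1)^{-1}$. This is where the real work lies: it is a trace-interpolation estimate. For fixed $t$, one writes, for $x\in\partial\Omega(s)$ and $y$ on the inward normal ray, $|u(t,x)|^{p+1}=|u(t,y)|^{p+1}-\int \tfrac{d}{dr}|u|^{p+1}\,dr$, integrates over a thin shell $\Omega(s)\setminus\Omega(s+\delta)$ and over $\partial\Omega(s)$, and uses $|\tfrac{d}{dr}|u|^{p+1}|\le (p+1)|u|^{p}|\nabla u|$ plus Young's inequality $|u|^p|\nabla u|\le C(|u|^{p+1}+|u|^{p+1-?}|\nabla u|^{p+1}\cdots)$ — more precisely one applies Hölder in $r$ and then in $x$ to split into an $L^{p+1}$-norm of $u$ (giving the $\int h$ terms) and an $L^{p+1}$-norm of $\nabla u$ over $\Omega(s)$ (giving the $(E_a^{(b)}(s))^{\theta}$ factor, with the exponent $\theta=1/(p+1)$ emerging from Hölder's exponent on the gradient). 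Integrating in $t$ over $(a,b)$ and absorbing the shell width produces exactly the structure claimed, with $k_1,k_2$ depending only on $n,p$ and the $C^2$-geometry of $\partial\Omega(s)$ (uniform in $s\le s_\Omega$ by the tubular-neighbourhood construction, hence independent of $s,a,b$). The main obstacle is making this trace estimate uniform in $s$ and with the precise interpolation exponent $\theta=(p+1)^{-1}$ rather than a cruder one; the delicate point is choosing the shell thickness and the Hölder exponents so that no power of $s$ leaks into the constants and so that the gradient appears only to the power needed to match $(E_a^{(b)}(s))^\theta$. Collecting the lower bound $\tfrac{d_0(p+1)}{p}E_a^{(b)}(s)$ from the diffusion term, the $h(b,s)-h(a,s)$ from the time term, and the product bound for the boundary term, and rearranging, yields \eqref{energy2}.
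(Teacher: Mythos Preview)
Your approach is correct and is precisely the standard local-energy argument the paper invokes (it gives no proof of its own, only the citations to lemma~6.2.2 of \cite{KSSh} and lemma~3.1 of \cite{DV1}): test with $u$ times a Lipschitz cutoff of $\Omega(s)$, use the chain rule for the time term, coercitivity \eqref{coercit} for the bulk elliptic part, growth \eqref{growth} plus H\"older on $\partial\Omega(s)$ for the boundary term, and finish with a multiplicative trace inequality.

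Two small clarifications. First, in this lemma $h(t,s)=\int_{\Omega(s)}|u(t,x)|^{p+1}\,dx$ is the instantaneous quantity of \eqref{reg1}, not the supremum $h_a^{(b)}(s)$ of \eqref{energy1}; so your ``$\le h(b,s)$'' remarks are in fact equalities and no direction issue arises. Second, the trace step is less delicate than you suggest: the standard inequality
\[
\int_{\partial\Omega(s)}|u|^{p+1}\,d\sigma
\le C\Bigl(\int_{\Omega(s)}|u|^{p+1}\,dx+\int_{\Omega(s)}|u|^{p}|\nabla u|\,dx\Bigr),
\]
valid with $C$ uniform in $s\in(0,s_\Omega)$ by the uniform $C^2$ geometry of $\partial\Omega(s)$, followed by H\"older in $x$ with exponents $\tfrac{p+1}{p},\,p+1$ and then H\"older in $t$ with the same pair, produces exactly $k_1\int_a^b h(t,s)\,dt+k_2\bigl(E_a^{(b)}(s)\bigr)^{1/(p+1)}\bigl(\int_a^b h(t,s)\,dt\bigr)^{p/(p+1)}$; no shell-width parameter needs to be tuned and the exponent $\theta=(p+1)^{-1}$ falls out automatically.
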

The proof repeats the proof of lemma 6.2.2 from \cite{KSSh} with small changes (see also
lemma 3.1 from \cite{DV1}).

\begin{lemma}\label{Lem2}
Under conditions and definitions of lemma \ref{Lem1} energy functions, connected with an
arbitrary solution $u$ of the problem under consideration, satisfy the following
relations:
\begin{equation}\label{stat1}
E_a^{(b)}(s)\leq C_1\int_{\Omega(s)}|u(a,x)|^{p+1}dx+C_2(b-a)^\frac1{p+1} \biggr(-\frac
d{ds}E_a^{(b)}(s)\biggr),
\end{equation}
\begin{equation}\label{stat2}
h_a^{(b)}(s)\leq(1+\gamma)\int_{\Omega(s)}|u(a,x)|^{p+1}dx+
C_3\gamma^{-\frac1{p+1}}(b-a)^\frac1{p+1}\biggr(-\frac d{ds}E_a^{(b)}(s)\biggr)
\end{equation}
for almost all $s\in(0,s_\Omega)$ and an arbitrary $\gamma:0<\gamma<1$. Here positive
constants $C_1<\infty$, $C_2<\infty$, $C_3<\infty$ depend on known parameters of the
problem under consideration only and particulary don't depend on $\gamma$.
\end{lemma}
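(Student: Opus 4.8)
The strategy is to derive both inequalities \eqref{stat1} and \eqref{stat2} from the single master energy relation \eqref{energy2} of Lemma \ref{Lem1}, by estimating the boundary integral $\int_a^b\int_{\partial\Omega(s)}|\nabla_xu|^{p+1}d\sigma\,dt$ in terms of the ``radial derivative'' $-\frac{d}{ds}E_a^{(b)}(s)$ and then absorbing the various cross terms via Young's inequality. The first step is to observe, via the coarea formula applied to the function $d(x)=\mathrm{dist}(x,\partial\Omega)$ (which is $C^2$ on $\Omega\setminus\Omega(s_\Omega)$ and has $|\nabla d|=1$ there), that $E_a^{(b)}(s)=\int_s^{s_\Omega}\!\!\big(\int_a^b\int_{\partial\Omega(\sigma)}|\nabla_xu|^{p+1}\,d\sigma'\,dt\big)\,d\sigma$ up to the contribution from the interior core, so that $-\frac{d}{ds}E_a^{(b)}(s)=\int_a^b\int_{\partial\Omega(s)}|\nabla_xu|^{p+1}\,d\sigma\,dt$ for almost every $s$. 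This identifies the troublesome boundary term in \eqref{energy2} with $-\frac{d}{ds}E_a^{(b)}(s)$, which is exactly the quantity appearing on the right-hand sides of \eqref{stat1}--\eqref{stat2}.

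**Bootstrapping the interior energy.** Next I would feed this identification back into \eqref{energy2}, dropping the manifestly nonnegative term $h(b,s)$ on the left, to get
\begin{equation*}
\tfrac{d_0(p+1)}{p}E_a^{(b)}(s)\leq h(a,s)+\Big(-\tfrac{d}{ds}E_a^{(b)}(s)\Big)^{\frac{p}{p+1}}\Big[k_1 I(s)+k_2\big(E_a^{(b)}(s)\big)^{\theta}I(s)^{1-\theta}\Big]^{\frac1{p+1}},
\end{equation*}
where $I(s):=\int_a^b h(t,s)\,dt\leq (b-a)\,h_a^{(b)}(s)$ and $\theta=(p+1)^{-1}$. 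Here the key point is that, by the definition $h_a^{(b)}(s)=\sup_{a\le t<b}\int_{\Omega(s)}|u|^{p+1}$ and the fact that $\Omega(s)\supset\Omega(s')$ for $s<s'$, one can in turn control $h_a^{(b)}(s)$ itself in terms of $h(a,s)$ plus a boundary/energy term — this is precisely the structure of the second assertion \eqref{stat2}, so the two inequalities must be proved essentially simultaneously, with \eqref{stat2} following from the same master relation by keeping the $h(b,s)$ term and taking a supremum over $b$. To close the loop, in the product term $(\cdot)^{p/(p+1)}(\cdot)^{1/(p+1)}$ I would apply Young's inequality with exponents $\frac{p+1}{p}$ and $p+1$ to split off a small multiple $\varepsilon\big(-\frac{d}{ds}E_a^{(b)}(s)\big)$ — but that's the wrong direction since we *want* $-\frac{d}{ds}E$ on the right. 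Instead one applies Young the other way: bound $h(a,s)$ and the bracket by constants times $\big(-\frac{d}{ds}E_a^{(b)}(s)\big)$ raised to appropriate powers times $(b-a)$-powers, which after collecting the exponents $\tfrac{p}{p+1}+\tfrac{1-\theta}{p+1}=\tfrac{p}{p+1}+\tfrac{p}{(p+1)^2}$ and the remaining $\big(E_a^{(b)}(s)\big)^{\theta/(p+1)}=\big(E_a^{(b)}(s)\big)^{1/(p+1)^2}$ power, and then absorbing the small power of $E_a^{(b)}(s)$ back to the left via another Young step, produces exactly the linear dependence on $-\frac{d}{ds}E_a^{(b)}(s)$ with the factor $(b-a)^{1/(p+1)}$ and the factor $\int_{\Omega(s)}|u(a,x)|^{p+1}dx=h(a,s)$ claimed in \eqref{stat1}.

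**The parameter $\gamma$ and the main obstacle.** For \eqref{stat2}, the same computation is carried out but now retaining $h(b,s)$ on the left before passing to $\sup_b$; the free parameter $\gamma\in(0,1)$ enters precisely at the Young-inequality step that separates the $\int_{\Omega(s)}|u(a,x)|^{p+1}dx$ contribution (which we want with coefficient $1+\gamma$, not a large constant) from the dissipation term (which then picks up the negative power $\gamma^{-1/(p+1)}$). The main technical obstacle, in my estimation, is bookkeeping the chain of Young inequalities so that (i) all absorbed terms genuinely have coefficient strictly less than the corresponding left-hand coefficient $\tfrac{d_0(p+1)}{p}$ (respectively $1$), uniformly in $a,b,s$, and (ii) the exponent of $(b-a)$ comes out to exactly $\tfrac1{p+1}$ rather than something $\theta$-dependent — this requires that the several fractional powers of $(b-a)$ generated along the way, namely from $I(s)\le(b-a)h_a^{(b)}(s)$ appearing at powers $\tfrac1{p+1}$ and $\tfrac{1-\theta}{p+1}$, recombine correctly after the self-improving absorption of $E_a^{(b)}(s)$; a short calculation with $\theta=(p+1)^{-1}$ confirms the exponents collapse as needed, and the constants $C_1,C_2,C_3$ depend only on $p,n,d_0,d_1$ through $k_1,k_2$ and the coarea constant, in particular not on $\gamma$.
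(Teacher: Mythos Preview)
Your approach is essentially the same as the paper's. The paper's proof is extremely brief: it records the coarea-type inequality
\[
-\frac{d}{ds}E_a^{(b)}(s)\;\geq\;\int_a^b\!\!\int_{\partial\Omega(s)}|\nabla_xu|^{p+1}\,d\sigma\,dt
\quad\text{for a.a.\ }s\in(0,s_\Omega),
\]
and then says only that \eqref{stat1}--\eqref{stat2} follow from \eqref{energy2} ``after simple computations that are analogous to the proof of lemma~6.2.3 from \cite{KSSh}''. Your outline is exactly this: identify the boundary term with $-\frac{d}{ds}E_a^{(b)}(s)$, bound $\int_a^b h(t,s)\,dt\le(b-a)\,h_a^{(b)}(s)$, and absorb the cross terms by Young's inequality. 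Two small remarks. First, the paper states only the inequality $\ge$ in the coarea step (which is all that is needed, and which holds without assuming absolute continuity of $s\mapsto E_a^{(b)}(s)$); your claimed equality is in fact true a.e.\ on $(0,s_\Omega)$ since $|\nabla d|=1$ there, but you should be aware that only the inequality is required. Second, your middle paragraph contains a false start (``that's the wrong direction\ldots'') before arriving at the correct use of Young; the clean route is to first bound $M:=\max\{h_a^{(b)}(s),\,\tfrac{d_0(p+1)}{p}E_a^{(b)}(s)\}$ in one stroke---both quantities are dominated by the same right-hand side of \eqref{energy2}---which, after bounding the bracket by $K\,\Delta^{p/(p+1)}M$ (using $\Delta\le T$), yields a closed inequality $M\le h(a,s)+\big(K^{1/p}\Delta^{1/(p+1)}D\big)^{p/(p+1)}M^{1/(p+1)}$ that gives \eqref{stat1} directly; \eqref{stat2} then follows by one further Young step with the free parameter $\gamma$.
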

\begin{proof}
Using standard way (see, for example, \cite{DV1}) we deduce that the function
$E_a^{(b)}(\cdot)$ from \eqref{energy1} is differentiable almost everywhere and the
following relation holds:
\begin{equation}\label{dif1}
-\frac{d}{ds}E_a^{(b)}(s)\geq\int_a^b
\int_{\partial\Omega(s)}|\nabla_xu(t,x)|^{p+1}d\sigma dt\ \ \ \text{for a.a. }
s\in(0,s_\Omega).
\end{equation}
Then starting from relation \eqref{energy2} from lemma \ref{Lem1} and using inequality
\eqref{dif1} we deduce inequalities \eqref{stat1}, \eqref{stat2} after simple
computations that are analogous to the proof of lemma 6.2.3 from \cite{KSSh}.
\end{proof}
Now we implement some construction which is essential for our further analysis. Firstly
we introduce the sequence $\{t_i\}$, $i=1,2,...$, $t_0=0$, $t_{i}-t_{i-1}:=\Delta_i>0$
for all $i\in{N}$, $t_i\rightarrow T$ as $i\rightarrow\infty$. This sequence is defined
by the function $F(t)$ from condition \eqref{global} namely
\begin{equation}\label{sequence1}
    \Delta_i:=(p+\mu)\omega^{p+\mu}\eta^{-(p+\mu)}(i+L)^{-(p+\mu+1)},\ \ \ i=1,2,...,
\end{equation}
where parameters $\omega$, $\mu$, $p$ are from \eqref{global}, $\eta$ and $L$ will be
defined later. Firstly we have to guarantee the following equality:
\begin{equation}\label{normal1}
    \sum_{i=1}^\infty\Delta_i=T\Rightarrow
    T=(p+\mu)\omega^{p+\mu}\eta^{-(p+\mu)}\sum_{i=1}^\infty(i+L)^{-(p+\mu+1)}:=
    (p+\mu)\omega^{p+\mu}\eta^{-(p+\mu)}\sigma(L,p+\mu),
\end{equation}
which can be considered as some relation between free parameters $\eta$, $L$. Now fix
constants $\gamma>0$, $\nu>0$ and sufficiently large $L_0>0$ such that
\begin{equation}\label{theta0}
\theta_0:=(1+\gamma)\biggr(\frac{1+L_0}{L_0}\biggr)^{\frac{p+\mu+1}{p+1}}\exp\biggr[-\frac{L_0}{(1+\nu)(1+L_0))}\biggr]<1,
\end{equation}
and further we will suppose that the parameter $L$ from \eqref{normal1} satisfies the
condition:
\begin{equation}\label{28.1}
    L\geq L_0.
\end{equation}
Note that due to the monotonicity of the function $\sigma(\cdot,p+\mu)$ the sufficient
condition for $\omega$, which guarantees the fulfilment of the relation \eqref{normal1},
is as follows:
\begin{equation}\label{condomega1}
    \omega\geq\tilde\omega=\biggr(\frac{T}{(p+\mu)\sigma(L_0,p+\mu)}\biggr)^{\frac1{p+\mu}}\eta.
\end{equation}
It means that for an arbitrary $\omega$ from \eqref{condomega1} there exists
$L=L(\omega)\geq L_0$ such that the relation \eqref{normal1} is satisfied. In virtue of
the following condition
\begin{equation*}
    \int_{j-1}^j\frac{ds}{s^{p+\mu+1}}>j^{-(p+\mu+1)}>\int_j^{j+1}\frac{ds}{s^{p+\mu+1}}
\end{equation*}
the following inequalities are valid:
\begin{equation}\label{28.2}
\omega^{p+\mu}\eta^{-(p+\mu)}(j+L+1)^{-(p+\mu)}<T-t_j:=\sum_{i=j+1}^\infty\Delta_i<\omega^{p+\mu}\eta^{-(p+\mu)}(j+L)^{-(p+\mu)}.
\end{equation}
These relations lead to
\begin{equation}\label{28.3}
\exp\big(\eta(j+L)\big)\leq\exp\biggr(\omega(T-t_j)^{-\frac1{p+\mu}}\biggr) \leq
e^\eta\exp\big(\eta(j+L)\big)
\end{equation}
and moreover,
\begin{equation}\label{28.4}
 \begin{aligned}
&\Delta_j=(p+\mu)\omega^{p+\mu}\eta^{-(p+\mu)}\big((j+L)^{-1}\big)^{(p+\mu+1)}\geq
\\&\geq(p+\mu)\omega^{p+\mu}\eta^{-(p+\mu)}\omega^{-(p+\mu+1)}\eta^{p+\mu+1}(T-t_j)^{\frac{p+\mu+1}{p+\mu}}
=(p+\mu)\omega^{-1}\eta(T-t_j)^{\frac{p+\mu+1}{p+\mu}}.
 \end{aligned}
\end{equation}
Analogously,
\begin{equation}\label{28.5}
\Delta_j\leq(p+\mu)\omega^{-1}\eta(T-t_j)^{\frac{p+\mu+1}{p+\mu}}(1+\xi_j),
\end{equation}
where $\xi_j=\biggr(1-\frac1{j+L+1}\biggr)^{-(p+\mu+1)}-1\rightarrow0$ as
$j\rightarrow\infty$.

Introduce now an infinite family of energy functions connected with a solution $u(t,x)$
under consideration:
\begin{equation}\label{28.6}
    E_j(s):=E_{t_{j-1}}^{(t_j)}(s),\ \ h_j(s):=h_{t_{j-1}}^{(t_j)}(s)\ \ \ \forall s>0,\
    j=1,2,...,
\end{equation}
where $E_a^{(b)}(s)$, $h_a^{(b)}(s)$ are from \eqref{energy1}, \eqref{stat2}. Then
system \eqref{stat1}, \eqref{stat2} leads to the following infinite system of ordinary
differential inequalities (ODI):
\begin{equation}\label{28.7}
    E_j(s)\leq C_1h_{j-1}(s)+C_2\Delta_j^\frac1{p+1}
    \biggr(-\frac d{ds}E_j(s)\biggr),\ \ \ j=1,2,...,
\end{equation}
\begin{equation}\label{28.8}
    h_j(s)\leq(1+\gamma)h_{j-1}(s)+ C_3\gamma^{-\frac1{p+1}}\Delta_j^\frac1{p+1}
    \biggr(-\frac d{ds}E_j(s)\biggr),\ \ \ j=1,2,...
\end{equation}
for almost all $s\in(0,s_\Omega)$, $\forall\gamma:0<\gamma<1$. Further we will need the
following consequence of the system \eqref{28.7}, \eqref{28.8}. Namely put
$\gamma=\gamma_0=2^{-1}$ in \eqref{28.8} and add obtained inequality to \eqref{28.7}. As
a result we have:
\begin{equation}\label{28.9}
h_j(s)+E_j(s)\leq \overline{C}_1h_{j-1}(s)+\overline{C}_2\Delta_j^\frac1{p+1}
\biggr(-\frac d{ds}E_j(s)\biggr),
\end{equation}
where $\overline{C}_1=C_1+\frac32$, $\overline{C}_2=C_2+2^{p+1}C_3$.

Now we will realize detailed analysis of the asymptotic behavior of a solution of the
system \eqref{28.7}, \eqref{28.8}, \eqref{28.9}, satisfying the corresponding initial
conditions.

\begin{lemma}\label{Lem3}
Let constants $\nu>0$, $L_0>0$, $\gamma>0$ satisfy \eqref{theta0} and as a consequence
we have the following relation:
\begin{equation}\label{28.10}
    \lambda_0:=(1+\gamma)^{-1}\theta_0\biggr(\frac{L_0}{1+L_0}\biggr)^{\frac{p+\mu+1}{p+1}}
    =\exp\biggr[-\frac{L_0}{(1+\nu)(1+L_0))}\biggr]<1.
\end{equation}
Let a sequence $\{\Delta_i\}$ is defined by \eqref{sequence1}, \eqref{normal1} and let
relations \eqref{28.1}, \eqref{condomega1} hold. Let some infinite sequence of
nonnegative nonincreasing absolutely continuous functions $E_j(s)$ satisfies ODI
\eqref{28.7} for almost all $s\in(0,s_\Omega)$, where a sequence of nonnegative
nonincreasing functions $h_j(s)$ satisfies inequality \eqref{28.8}. Let the following
initial condition holds:
\begin{equation}\label{28.12}
    E_j(0)\leq G\exp\big(\eta(j+L+1)\big)\ \ \forall j\in{N},
    \ \eta>\ln\lambda_0^{-1},\ G=\text{const}\geq1.
\end{equation}
Then functions $\{E_j(s)\}$ satisfy the following estimate:
\begin{equation}\label{17.1}
\begin{aligned}
&E_j(s)\leq K_5G\exp\big(\ln \lambda_0^{-1}(j+L)\big)U_\omega(s),\ \ \ \forall j\in{N},\
\forall s\in(0,s_\Omega),
\\&\text{where }U_\omega(s)=\exp\Big(K_6\gamma^{-\frac1\mu}\eta^{-\frac{p+\mu}\mu}\big(\eta-\ln\lambda_0^{-1}\big)^\frac{p+\mu+1}\mu
\omega^\frac{p+\mu}\mu s^{-\frac{p+1}\mu}\Big),
\end{aligned}
\end{equation}
and constants $K_5,K_6<\infty$ do not depend on $G,\eta,\omega,\gamma$;
$s':=\min(s_\Omega,s_\omega)$, $s_\omega=K_7\omega^\frac{p+\mu}{p+1}$, $K_7>0$ does not
depend on $\omega,\,G$.
\end{lemma}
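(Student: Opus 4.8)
The plan is to iterate the coupled system \eqref{28.7}--\eqref{28.8} and show that the geometric-type initial growth $E_j(0)\le G\exp(\eta(j+L+1))$ is damped down to the slower growth rate $\exp(\ln\lambda_0^{-1}(j+L))$ at the cost of the Gaussian-in-$s$ factor $U_\omega(s)$. I would proceed by constructing a family of barrier (majorant) functions of the form $\widehat E_j(s)=A_j\Phi(s)$, where $\Phi(s)$ solves (in the sense of a differential inequality) the scalar ODI obtained by freezing the index, and $A_j$ is a scalar sequence satisfying a linear recurrence. Concretely: from \eqref{28.7} one has, pointwise in $s$, $E_j(s)\le C_1 h_{j-1}(s)+C_2\Delta_j^{1/(p+1)}(-E_j'(s))$; feeding in the bound for $h_{j-1}$ coming from \eqref{28.8} (which itself involves $h_{j-2}$ and $-E_{j-1}'$), one sees that the natural object to control is the combination $h_j+E_j$ as in \eqref{28.9}, namely $h_j(s)+E_j(s)\le\overline C_1 h_{j-1}(s)+\overline C_2\Delta_j^{1/(p+1)}(-E_j'(s))$. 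Since $h_{j-1}\le h_{j-1}+E_{j-1}$, writing $W_j(s):=h_j(s)+E_j(s)$ gives the single scalar chain $W_j(s)\le\overline C_1 W_{j-1}(s)+\overline C_2\Delta_j^{1/(p+1)}(-E_j'(s))$.

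Next I would integrate this inequality in $s$ against a suitable weight. The key quantitative input is the size of $\Delta_j^{1/(p+1)}$: by \eqref{sequence1}, $\Delta_j^{1/(p+1)}=\big((p+\mu)\omega^{p+\mu}\eta^{-(p+\mu)}\big)^{1/(p+1)}(j+L)^{-(p+\mu+1)/(p+1)}$, so the coefficient in front of the dissipation term decays in $j$ like a fixed power of $(j+L)$. Multiplying the chain for $W_j$ by $\lambda_0^{-j}$-type weights and summing, or equivalently looking for a solution ansatz $W_j(s)=\beta_j v(s)$ with $\beta_j=K_5 G\exp(\ln\lambda_0^{-1}(j+L))$, one reduces to checking that $v$ can be taken as $U_\omega(s)$. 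Plugging the ansatz into $W_j\le\overline C_1 W_{j-1}+\overline C_2\Delta_j^{1/(p+1)}(-W_j')$ (using $-E_j'\le -W_j'$ is false in sign, so more carefully one keeps $-E_j'$ and bounds $E_j\le W_j$ on the left), the $j$-dependence factors as $\lambda_0 \overline C_1\cdot(\text{correction})$, and the definition \eqref{28.10} of $\lambda_0$ together with $\eta>\ln\lambda_0^{-1}$ from \eqref{28.12} is exactly what makes the multiplicative constant in the recurrence $\le 1$ after absorbing $\overline C_2\Delta_j^{1/(p+1)}(-v'/v)$. This forces $v=U_\omega$ to satisfy the scalar ODE $-v'(s)=\kappa\,\gamma^{-1/\mu}\eta^{-(p+\mu)/\mu}(\eta-\ln\lambda_0^{-1})^{(p+\mu+1)/\mu}\omega^{(p+\mu)/\mu}s^{-(p+1)/\mu-1}v(s)$ up to constants, whose solution is precisely the stated exponential; the exponents $\tfrac{p+\mu}\mu$ on $\omega$ and $-\tfrac{p+1}\mu$ on $s$, and the power $\tfrac{p+\mu+1}\mu$ on $(\eta-\ln\lambda_0^{-1})$, are then dictated by matching the algebra of $\sum_j (j+L)^{-(p+\mu+1)/(p+1)}$ against the geometric factors.

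The cutoff $s'=\min(s_\Omega,s_\omega)$ with $s_\omega=K_7\omega^{(p+\mu)/(p+1)}$ enters because the summation of the dissipation contributions, and the requirement that the correction factors $(1+\xi_j)$ from \eqref{28.5} and $\big((L+1)/L\big)^{(p+\mu+1)/(p+1)}$ stay below the threshold implicit in \eqref{theta0}, is only legitimate while $\overline C_2\Delta_j^{1/(p+1)}s^{-(p+1)/((p+1)\mu)}$-type quantities are bounded — this is where $s$ must not be too large relative to $\omega$. The main obstacle, and the step I expect to require the most care, is the bookkeeping of the iteration: one must propagate the ansatz $W_j(s)\le\beta_j U_\omega(s)$ by induction on $j$ simultaneously with an auxiliary bound on $\int_0^s(-E_j'(\sigma))\,d\sigma$-type quantities, verifying at each step that the accumulated product of near-unit constants $\prod_{k\le j}(\lambda_0\overline C_1(1+\xi_k)\cdots)$ does not blow up — this is the analogue of the delicate convergence argument in Lemma 6.2.3 and related estimates of \cite{KSSh}, and handling it uniformly in $G,\eta,\omega,\gamma$ (so that $K_5,K_6,K_7$ genuinely do not depend on these) is the crux. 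Once the induction closes, evaluating at the initial index and using \eqref{28.12} gives \eqref{17.1} directly, since $E_j(s)\le W_j(s)\le\beta_j U_\omega(s)=K_5 G\exp(\ln\lambda_0^{-1}(j+L))U_\omega(s)$.
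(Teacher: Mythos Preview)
Your proposal has a genuine gap at the contraction step. You pass from \eqref{28.7}--\eqref{28.8} to the combined inequality \eqref{28.9}, write $W_j\le\overline C_1 W_{j-1}+\overline C_2\Delta_j^{1/(p+1)}(-E_j')$, and then look for a separable barrier $W_j=\beta_j v(s)$ with $\beta_j\sim\lambda_0^{-(j+L)}$. For this to close you need $\overline C_1\lambda_0<1$, and you assert that the definition \eqref{28.10} of $\lambda_0$ ``is exactly what makes the multiplicative constant in the recurrence $\le 1$''. But \eqref{28.10} and \eqref{theta0} are calibrated to the factor $(1+\gamma)$ appearing in \eqref{28.8}, not to $\overline C_1=C_1+\tfrac32$ from \eqref{28.9}: what \eqref{theta0} actually guarantees is $(1+\gamma)\lambda_0\big(\tfrac{j+L}{j+L-1}\big)^{(p+\mu+1)/(p+1)}\le\theta_0<1$, and there is no control whatsoever on $\overline C_1\lambda_0$. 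Indeed $\lambda_0=\exp\big(-L_0/((1+\nu)(1+L_0))\big)>e^{-1}$ for every admissible choice of $\nu>0$, $L_0>0$, while $\overline C_1$ is a fixed structural constant that can certainly exceed $e$. So your recurrence does not contract and the induction cannot close. (Incidentally, your sign worry is misplaced: since $h_j$ is nonincreasing one has $-W_j'\ge -E_j'$, so replacing $-E_j'$ by $-W_j'$ on the right only enlarges the bound; the real obstruction is the size of $\overline C_1$.)

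The paper's proof avoids this by keeping \eqref{28.7} and \eqref{28.8} separate. One first rescales $A_j=\lambda_0^{j+L}E_j$, $H_j=\lambda_0^{j+L}h_j$, so that \eqref{28.8} becomes $H_j\le\lambda_1 H_{j-1}+C_3\gamma^{-1/(p+1)}\Delta_j^{1/(p+1)}(-A_j')$ with the genuinely small factor $\lambda_1=(1+\gamma)\lambda_0<1$. This $h$-recurrence is iterated \emph{inside} the bound \eqref{28.7} for $A_j$, producing a weighted sum over all past derivatives $-A_i'$, $i\le j$; the constant $C_1$ from \eqref{28.7} then appears only once (and is absorbed into $K_5$), never iterated. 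The resulting weighted sums $U_j(s)=\sum_{i\le j}\lambda_1^{j-i}(\Delta_i/\Delta_j)^{1/(p+1)}A_i(s)$ satisfy a single scalar ODI $U_j\le\theta_0 U_{j-1}+\overline C_3\gamma^{-1/(p+1)}\Delta_j^{1/(p+1)}(-U_j')$ with true contraction $\theta_0<1$, and it is this object --- not a separable ansatz --- to which the barrier lemmas 9.2.7--9.2.8 of \cite{KSSh} are applied to produce $U_\omega(s)$. Your product form $W_j=\beta_j v(s)$ is too rigid to capture this: the $s$-profile at level $j$ genuinely depends on all earlier indices through the weighted sum.
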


\begin{proof}
Let us introduce a sequence of functions $\{A_i(s)\}$, $\{H_i(s)\}$ connected with
$\{E_i(s)\}$, $\{h_i(s)\}$, namely
\begin{equation}\label{29.1}
A_j(s):=\lambda_0^{j+L}E_j(s),\ \ H_j(s):=\lambda_0^{j+L}h_j(s)\ \ \ \forall j\in{N},\
H_0(s)=\lambda_0^Lh_0(s),
\end{equation}
where $\lambda_0$ is from \eqref{28.10}. With respect to these functions the system
\eqref{28.7}, \eqref{28.8} yields:
\begin{equation}\label{29.3}
    A_j(s)\leq C_1\lambda_0H_{j-1}(s)+C_2\Delta_j^\frac1{p+1}
    \biggr(-\frac d{ds}A_j(s)\biggr)\ \ \ j=1,2,...,
\end{equation}
\begin{equation}\label{29.4}
    H_j(s)\leq\lambda_1H_{j-1}(s)+ C_3\gamma^{-\frac1{p+1}}\Delta_j^\frac1{p+1}
    \biggr(-\frac d{ds}A_j(s)\biggr)\ \ \ \forall j\in{N},\text{ for a.a. }s\in(0,s_\Omega),
\end{equation}
where $\lambda_1:=(1+\gamma)\lambda_0$. It follows from \eqref{28.10} that
$\lambda_1=\theta_0\Big(\frac{L_0}{1+L_0}\Big)^\frac{p+\mu+1}{p+1}<1$. Now we estimate
the first term in the right hand side of \eqref{29.3} using \eqref{29.4} with an index
$j$ instead of $(j-1)$. After $j$ such iterations we arrive at the following relation:
\begin{equation}\label{29.5}
A_j(s)\leq C_1(1+\gamma)^{-1}\lambda_1^jH_0(s)+
\overline{C}_3\gamma^{-\frac1{p+1}}\Delta_j^\frac1{p+1}
\sum_{i=1}^j\lambda_1^{j-i}\biggr(\frac{\Delta_i}{\Delta_j}\biggr)^\frac1{p+1}\big(-A'_i(s)\big),
\end{equation}
where $\overline{C}_3=\max\{C_2\gamma^\frac1{p+1},\lambda_0C_1C_3\}$. Now with respect
to following energy functions:
\begin{equation}\label{29.6}
    U_j(s):=\sum_{i=1}^j\lambda_1^{j-i}\biggr(\frac{\Delta_i}{\Delta_j}\biggr)^\frac1{p+1}A_i(s),
    \ \ \ j=1,2,...,
\end{equation}
the relation \eqref{29.5} leads to the following inequality:
\begin{equation}\label{29.7}
\begin{aligned}
&U_j(s)\leq C_1(1+\gamma)^{-1}\lambda_1^jH_0(s)+\theta_jU_{j-1}(s)+
\overline{C}_3\gamma^{-\frac1{p+1}}\Delta_j^\frac1{p+1}\big(-U'_j(s)\big),
\\&j=1,2,...,\ H_0(s)=\lambda_0^{L}h_0(s).
\end{aligned}
\end{equation}
Due to \eqref{sequence1}, \eqref{28.1}, \eqref{28.10} and \eqref{theta0} we can get:
\begin{equation}\label{30.1}
\theta_j=\lambda_1\biggr(\frac{\Delta_{j-1}}{\Delta_j}\biggr)^\frac1{p+1}=
(1+\gamma)\lambda_0\biggr(\frac{j+L}{j+L-1}\biggr)^\frac{p+\mu+1}{p+1}\leq
(1+\gamma)\lambda_0\biggr(\frac{1+L_0}{L_0}\biggr)^\frac{p+\mu+1}{p+1}=\theta_0<1.
\end{equation}
From \eqref{28.12}, \eqref{29.1} and \eqref{28.4} we obtain the following estimate:
\begin{equation}\label{30.3}
 \begin{aligned}
 &U_j(0)=\sum_{i=1}^j\lambda_1^{j-i}\biggr(\frac{\Delta_i}{\Delta_j}\biggr)^\frac1{p+1}\lambda_0^{i+L}Ge^\eta\exp\big(\eta(j+L)\big)\leq
\\&\leq\sum_{i=1}^j((1+\gamma)\lambda_0)^{j-i}\biggr(\frac{j+L}{i+L}\biggr)^\frac{p+\mu+1}{p+1}\lambda_0^{i+L}Ge^\eta\exp\big(\eta(j+L)\big)\leq
 \\&\leq Ge^\eta\sum_{i=1}^j\theta_0^{j-i}\exp\big[(\eta-\ln\lambda_0^{-1})(i+L)\big]\leq
\\&\leq Ge^\eta(1-\theta_0)^{-1}\exp\big[(\eta-\ln\lambda_0^{-1})(j+L)\big]\ \ \
 \forall j\in{N}.
 \end{aligned}
\end{equation}
Additionally it follows from \eqref{sequence1} that
$(j+L)=(p+\mu)^\frac1{p+\mu+1}\omega^\frac{p+\mu}{p+\mu+1}\eta^{-\frac{p+\mu}{p+\mu+1}}\Delta_j^{-\frac1{p+\mu+1}}$
$\forall\,j\in{N}$. Therefore estimate \eqref{30.3} yields:
\begin{equation}\label{30.4}
 U_j(0)\leq Ge^\eta(1-\theta_0)^{-1}
 \exp\big[(p+\mu)^\frac1{p+\mu+1}\omega^\frac{p+\mu}{p+\mu+1}\eta^{-\frac{p+\mu}{p+\mu+1}}(\eta-\ln\lambda_0^{-1})\Delta_j^{-\frac1{p+\mu+1}}\big].
\end{equation}
Since $C_1(1+\gamma)^{-1}\lambda_1^j\leq C_1(1+\gamma)^{-1}\lambda_1=C_1\lambda_0$ it
follows from \eqref{29.7}, \eqref{30.4}:
\begin{equation}\label{30.5}
\begin{aligned}
&\overline{U}_j(s):=U_j(s)-\frac{C_1\lambda_0}{1-\theta_0}H_0(s)\leq\theta_0\overline{U}_{j-1}(s)+
\overline{C}_3\gamma^{-\frac1{p+1}}\Delta_j^\frac1{p+1}\big(-\overline{U}'_j(s)\big),
\\&\overline{U}_j(0)\leq\frac{Ge^\eta}{1-\theta_0}
 \exp\big[(p+\mu)^\frac1{p+\mu+1}\omega^\frac{p+\mu}{p+\mu+1}\eta^{-\frac{p+\mu}{p+\mu+1}}(\eta-\ln\lambda_0^{-1})\Delta_j^{-\frac1{p+\mu+1}}\big]
\ \ \forall j\in{N}.
\end{aligned}
\end{equation}
Now we rewrite the last system in the form:
\begin{equation}\label{1.1}
\begin{aligned}
&\tilde{U}_j(s):=G^{-1}e^{-\eta}(1-\theta_0)\overline{U}_j(s)\leq
\theta_0\tilde{U}_{j-1}(s)+(1-\theta_0)m_j\big(-\tilde{U}'_j(s)\big),
\\&\tilde{U}_j(0)\leq\exp{M_j}\ \ \
\forall j\in{N},
\end{aligned}
\end{equation}
where $m_j=(1-\theta_0)^{-1}\overline{C}_3\gamma^{-\frac1{p+1}}\Delta_j^\frac1{p+1}$,
$M_j=(p+\mu)^\frac1{p+\mu+1}\eta^{-\frac{p+\mu}{p+\mu+1}}\big(\eta-\ln\lambda_0^{-1}\big)
\omega^\frac{p+\mu}{p+\mu+1}\Delta_j^{-\frac1{p+\mu+1}}$. Due to lemmas 9.2.7, 9.2.8
from \cite{KSSh} it follows from \eqref{1.1} a uniform upper estimate:
\begin{equation}\label{1.2}
\tilde{U}_j(s)\leq U(s):=\exp\big(B_1\omega^\frac{p+\mu}\mu s^{-\frac{p+1}\mu}\big)\ \ \
\forall\,s\in(0,s_\Omega),\ \forall j\in{N},
\end{equation}
where $B_1=C_4\gamma^{-\frac1\mu}\eta^{-\frac{p+\mu}\mu}
\big(\eta-\ln\lambda_0^{-1}\big)^\frac{p+\mu+1}\mu$,
$C_4=\frac{\mu(p+\mu)^\frac1{\mu}(p+1)^\frac{p+1}\mu\overline{C}_3^\frac{p+1}\mu}
{(p+\mu+1)^\frac{p+\mu+1}{\mu}(1-\theta_0)^{\frac{p+1}\mu}}$. Therefore it follows from
definitions \eqref{29.1}, \eqref{28.12} that:
\begin{equation}\label{1.3}
\begin{aligned}
&E_j(s)=\lambda_0^{-(j+L)}A_j(s)\leq\lambda_0^{-(j+L)}U_j(s)\leq
\lambda_0^{-(j+L)}\biggr(\frac{\tilde{U}_j(s)Ge^\eta}{1-\theta_0}+
\frac{C_1\lambda_0H_0(s)}{1-\theta_0}\biggr)\leq
\\&\leq(1-\theta_0)^{-1}\lambda_0^{-(j+L)}
\Big(Ge^\eta\exp\big[B_1\omega^\frac{p+\mu}\mu
s^{-\frac{p+1}\mu}\big]+C_1\lambda_0H_0(s)\Big)\leq
\\&\leq C_5G\lambda_0^{-(j+L)}
\exp\big[B_1\omega^\frac{p+\mu}\mu s^{-\frac{p+1}\mu}\big]\ \ \ \forall s:0<s<s',\
C_5=2(1-\theta_0)^{-1}e^\eta
\end{aligned}
\end{equation}
where $s'=\min(s_\Omega,s_\omega)$ and $s_\omega$ is defined by the following relation:
\begin{equation}\label{1.4}
B_1\omega^\frac{p+\mu}\mu s^{-\frac{p+1}\mu}\geq B_2:=\ln C_1+\ln\lambda_0+\ln H_0(0)\ \
\ \ \forall s:0<s\leq
s_\omega=B_1^\frac\mu{p+1}B_2^{-\frac\mu{p+1}}\omega^\frac{p+\mu}{p+1}.
\end{equation}
Thus, the estimate \eqref{17.1} is proved with $K_5=C_5$, $K_6=C_4$,
$K_7=B_1^\frac\mu{p+1}B_2^{-\frac\mu{p+1}}$
\end{proof}

\section{Proof of Theorem \ref{Th.1}: rough estimate of solution near the blow-up time}

Let $u$ be a solution of the problem \eqref{equat}--\eqref{initial} under consideration
and families of subdomains $\Omega(s)$, $Q(s)$ are from \eqref{fam1}. Let $\{E_j(s)\}$,
$\{h_j(s)\}$ be families of energy functions \eqref{28.6} that connect with $u$ and
correspond to the family $\{\Delta_j\}$ from \eqref{sequence1}. Let the parameter $\eta$
from definition \eqref{sequence1} satisfies the inequality:
\begin{equation}\label{cond23.11}
\eta>\ln\lambda_0^{-1}=\frac{L_0}{(1+\nu)(1+L_0)},
\end{equation}
where $L_0,\nu$ are from \eqref{28.10}, and let the inequality \eqref{condomega1} holds.
Then the system \eqref{28.7}, \eqref{28.8} is satisfied. Moreover due to the condition
\eqref{global} and the properties \eqref{28.3} we have:
\begin{equation*}
E(t_j)\leq\exp\Big(\omega(T-t_j)^{-\frac1{p+\mu}}\Big) \leq\exp(\eta(j+L+1))\ \ \
\forall j\in{N}
\end{equation*}
and therefore
\begin{equation}\label{18.1}
E_j(0)\leq\exp(\eta(j+L+1))\ \ \ \forall j\in{N}.
\end{equation}
Consequently in virtue of lemma \ref{Lem3} energy functions $\{E_j(s)\}$, $j=1,2,...$,
satisfy estimate \eqref{17.1} with $G=1$, $K_5=C_5$ from \eqref{1.3}, $K_6=C_4$ from
\eqref{1.2}, namely
\begin{equation}\label{18.2}
E_j(s)\leq C_5\exp(\ln\lambda_0^{-1}(j+L))U_\omega(s)\ \ \ \forall j\in{N},\ \forall
s:0<s<s'=\min(s_\Omega,s_\omega).
\end{equation}
Let us fix an arbitrary value $s_1:0<s_1<s'$ and write the inequality \eqref{18.2} in
the following form:
\begin{equation}\label{18.3}
E_j(s_1)\leq C_5G_1\lambda_0^{-(j+L)}\ \ \ \forall j\in{N},\ \ G_1:=U_\omega(s_1).
\end{equation}
Summing the estimates \eqref{18.3} we get
\begin{equation}\label{18.4}
E(t_j,s_1):=\int_0^{t_j}\int_{\Omega(s_1)}|\nabla_xu(t,x)|^{p+1}dxdt\leq
C_6G_1\lambda_0^{-(j+L)}\ \ \ \forall j\in{N},\ C_6=C_5(1-\lambda_0)^{-1}.
\end{equation}
Due to \eqref{28.3} and \eqref{28.10} the estimate \eqref{18.4} yields:
\begin{equation}\label{18.5}
\begin{aligned}
&E(t_j,s_1)\leq
C_6G_1\exp\Big(\ln\lambda_0^{-1}\eta^{-1}\omega(T-t_j)^{-\frac1{p+\mu}}\Big)=
\\&=C_6G_1\exp\biggr(\frac{L_0}{(1+\nu)(1+L_0)\eta}\omega(T-t_j)^{-\frac1{p+\mu}}\biggr)=
C_6G_1\exp\Big(\omega_1(T-t_j)^{-\frac1{p+\mu}}\Big),
\end{aligned}
\end{equation}
where due to \eqref{cond23.11} we get:
\begin{equation}\label{18.6}
\omega_1=\zeta\omega,\ \ \ \zeta=\eta^{-1}\ln\lambda_0^{-1}<1.
\end{equation}
Finally it follows from \eqref{18.5} that in virtue of \eqref{18.3} the following
inequality holds:
\begin{equation}\label{18.7}
E(t,s_1)\leq C_7G_1\exp\Big(\omega_1(T-t)^{-\frac1{p+\mu}}\Big)\ \ \ \forall t<T,\
C_7=C_6\lambda_0^{-1}.
\end{equation}
Estimate \eqref{18.7} is the final result of the first round of computations. Now we
begin the second round. Suppose, that the constant $\omega_1$ from \eqref{18.6}
satisfies condition \eqref{condomega1}, namely:
\begin{equation}\label{18.8}
\omega_1\geq\tilde{\omega}=\biggr(\frac{T}{(p+\mu)\sigma(L_0,p+\mu)}\biggr)^\frac1{p+\mu}\eta.
\end{equation}
Then introduce a new sequence of shifts $\{\Delta_j^{(1)}\}$:
\begin{equation}\label{18.9}
    \Delta_j^{(1)}:=(p+\mu)\omega_1^{p+\mu}\eta^{-(p+\mu)}(j+L)^{-(p+\mu+1)},\ \ \
j=1,2,...\ .
\end{equation}
It is clear that the analog of relations \eqref{28.3} holds, namely:
\begin{equation}\label{18.10}
\exp\big(\eta(j+L)\big)\leq\exp\biggr(\omega_1(T-t_j^{(1)})^{-\frac1{p+\mu}}\biggr) \leq
e^\eta\exp\big(\eta(j+L)\big),
\end{equation}
and as consequence
\begin{equation}\label{18.11}
\Delta_j^{(1)}\geq(p+\mu)\omega_1^{-1}\eta(T-t_j^{(1)})^{\frac{p+\mu+1}{p+\mu}}\ \ \
\forall j\in{N},
\quad\Delta_j^{(1)}\leq(p+\mu)\omega_1^{-1}\eta(T-t_j^{(1)})^{\frac{p+\mu+1}{p+\mu}}(1+\xi_j),
\end{equation}
where $\xi_j\rightarrow0$ as $j\rightarrow\infty$. Now introduce new energy functions:
\begin{equation*}
E_j^{(1)}(s):=E_{t_{j-1}^{(1)}}^{\big(t_j^{(1)}\big)}(s),\ \
h_j^{(1)}(s):=h_{t_{j-1}^{(1)}}^{\big(t_j^{(1)}\big)}(s)\ \ \ \forall s>s_1.
\end{equation*}
It is obvious that these functions satisfy the following analog of \eqref{28.7},
\eqref{28.8}.
\begin{equation}\label{18.12}
    E_j^{(1)}(s)\leq C_1h_{j-1}^{(1)}(s)+C_2\big(\Delta_j^{(1)}\big)^\frac1{p+1}
    \biggr(-\frac d{ds}E_j^{(1)}(s)\biggr),
\end{equation}
\begin{equation}\label{18.13}
    h_j^{(1)}(s)\leq(1+\gamma)h_{j-1}^{(1)}(s)+ C_3\gamma^{-\frac1{p+1}}\big(\Delta_j^{(1)}\big)^\frac1{p+1}
    \biggr(-\frac d{ds}E_j^{(1)}(s)\biggr)
\end{equation}
for almost all $s\in(s_1,s_\Omega)$ and all $j\in{N}$. In virtue of \eqref{18.10} the
estimate \eqref{18.7} yields:
\begin{equation}\label{18.14}
E_j^{(1)}(s_1)\leq C_7G_1\exp\big(\eta(j+L+1)\big)\ \ \ \forall j\in{N}.
\end{equation}
Then due to lemma \ref{Lem3} it follows from \eqref{18.12}--\eqref{18.14} the following
estimate:
\begin{equation}\label{18.15}
E_j^{(1)}(s)\leq
C_5C_7G_1\exp\big(\ln\lambda_0^{-1}(j+L)\big)U_{\omega_1}(s-s_1)\quad\forall\,s\in(s_1,s'),\,\forall\,j\in{N}.
\end{equation}
Particulary for $s=s_1+s_2<s'$ we have:
\begin{equation}\label{18.16}
E_j^{(1)}(s_1+s_2)\leq
 C_5C_7G_1\exp\big(\ln\lambda_0^{-1}(j+L)\big)U_{\omega_1}(s_2):=
 C_5C_7G_1G_2\lambda_0^{-(j+L)}\ \ \ \forall j\in{N},
\end{equation}
where $G_2=U_{\omega_1}(s_2)$. Summing estimates \eqref{18.16} we get:
\begin{equation}\label{18.17}
E(t_j^{(1)},s_1+s_2)\leq C_6C_7G_1G_2\lambda_0^{-(j+L)}\ \ \ \forall j\in{N}.
\end{equation}
Due to \eqref{18.10} the estimate \eqref{18.17} yields:
\begin{equation}\label{18.18}
\begin{aligned}
&E(t,s_1+s_2)\leq
C_7^2G_1G_2\exp\big(\ln\lambda_0^{-1}\eta^{-1}\omega_1(T-t)^{-\frac1{p+\mu}}\big)=
C_7^2G_1G_2\exp\big(\omega_2(T-t)^{-\frac1{p+\mu}}\big)
\\&\forall t<T,\omega_2=\zeta\omega_1,\text{ where }
\zeta=\ln\lambda_0^{-1}\eta^{-1}<1.
\end{aligned}
\end{equation}
Estimate \eqref{18.18} is the final estimate of the second round of computations. It is
clear that we can realize $l$ such a rounds, where $l$ is defined by
\begin{equation}\label{18.19}
\omega_l\geq\tilde{\omega}, \ \ \ \tilde{\omega}\text{ is from } \eqref{condomega1}, \
\omega_{l+1}<\tilde{\omega}.
\end{equation}
As result we obtain:
\begin{equation}\label{18.20}
E\Big(t,\sum_{i=1}^ls_i\Big)\leq
C_7^l\prod_{i=1}^lG_i\exp\big(\omega_l(T-t)^{-\frac1{p+\mu}}\big) \ \ \ \forall
t<T,\,\sum_{i=1}^ls_i\leq s'.
\end{equation}
Moreover we have:
\begin{equation}\label{18.21}
\begin{aligned}
&\prod_{i=1}^lG_i=\prod_{i=1}^lU_{\omega_{i-1}}(s_i)=\exp\biggr[C_4\gamma^{-\frac1\mu}\eta^{-\frac{p+\mu}\mu}
d^\frac{p+\mu+1}\mu\sum_{i=1}^l\omega_{i-1}^\frac{p+\mu}\mu s_i^{-\frac{p+1}\mu}\biggr]=
\\&=\exp\biggr[C_4\gamma^{-\frac1\mu}\eta^{-\frac{p+\mu}\mu}
d^\frac{p+\mu+1}\mu\omega^\frac{p+\mu}\mu s_1^{-\frac{p+1}\mu}
\sum_{i=1}^l\biggr(\frac{\omega_{i-1}}\omega\biggr)^\frac{p+\mu}\mu\biggr(\frac{s_1}{s_i}\biggr)^\frac{p+1}\mu
\biggr],
\\&\omega_0:=\omega,d=\eta-\ln\lambda_0^{-1}>0.
\end{aligned}
\end{equation}
It is clear that $\frac{\omega_{i-1}}\omega=\zeta^{i-1}$, where
$\zeta=\frac{\ln\lambda_0^{-1}}{\ln\lambda_0^{-1}+d}<1$. As for $s_i$, we define it by
the following relation:
\begin{equation}\label{18.22}
s_i:=s_1\rho^{i-1}\ \ \forall i\geq1,
\end{equation}
where $\rho=const<1$ will be defined later. Then
\begin{equation}\label{18.23}
\biggr(\frac{\omega_{i-1}}\omega\biggr)^\frac{p+\mu}\mu\biggr(\frac{s_1}{s_i}\biggr)^\frac{p+1}\mu=\zeta^\frac{(i-1)(p+\mu)}\mu
\rho^{-\frac{(i-1)(p+1)}\mu}=\biggr(\frac{\zeta^{p+\mu}}{\rho^{p+1}}\biggr)^\frac{i-1}\mu.
\end{equation}
Let us define now $\rho$ as follows:
\begin{equation}\label{18.24}
\rho^{p+1}=\frac{1+\zeta^{p+\mu}}2\quad\Rightarrow\quad\rho:=\left(2^{-1}(1+\zeta^{p+\mu})\right)^\frac1{p+1}.
\end{equation}
Then
\begin{equation}\label{18.25}
\biggr(\frac{\zeta^{p+\mu}}{\rho^{p+1}}\biggr)^\frac1\mu=
\biggr(\frac{2\zeta^{p+\mu}}{1+\zeta^{p+\mu}}\biggr)^\frac1\mu:={\ae}=\text{const}<1
\end{equation}
Consequently,
\begin{equation}\label{18.26}
\sum_{i=1}^l\biggr(\frac{\omega_{i-1}}\omega\biggr)^\frac{p+\mu}\mu\biggr(\frac{s_1}{s_i}\biggr)^\frac{p+1}\mu=
\sum_{i=1}^l{\ae}^{i-1}<\frac1{1-{\ae}}.
\end{equation}
Additionally,
\begin{equation}\label{18.27}
\sum_{i=1}^ls_i=s_1\sum_{i=1}^l\rho^{i-1}<s_1\frac1{1-\rho}:=S^{(\infty)}.
\end{equation}
Therefore it follows from \eqref{18.21}:
\begin{equation}\label{18.30}
\prod_{i=1}^lG_i\leq\exp\Big[C_4\gamma^{-\frac1\mu}\eta^{-\frac{p+\mu}\mu}
d^\frac{p+\mu+1}\mu\omega^\frac{p+\mu}\mu
s_1^{-\frac{p+1}\mu}(1-{\ae})^{-1}\Big]:=G^{(\infty)}(s_1).
\end{equation}
Now in virtue of \eqref{18.27}, \eqref{18.30} estimate \eqref{18.20} yields for an
arbitrary $s_1:(1-\rho)^{-1}s_1<s'$ the following inequality:
\begin{equation}\label{18.28}
E\Big(t,\frac{s_1}{1-\rho}\Big)\leq
C_7^l\exp\big(\omega_l(T-t)^{-\frac1{p+\mu}}\big)\exp\Big[C_4\gamma^{-\frac1\mu}\eta^{-\frac{p+\mu}\mu}
d^\frac{p+\mu+1}\mu(1-{\ae})^{-1}\omega^\frac{p+\mu}\mu s_1^{-\frac{p+1}\mu}\Big],
\end{equation}
which implies
\begin{equation}\label{18.29}
E(t,s)\leq
C_7^l\exp\big(\omega_l(T-t)^{-\frac1{p+\mu}}\big)\exp[C_8\omega^{\frac{p+\mu}\mu}s^{-\frac{p+1}\mu}]\
\ \ \forall t<T,\ \forall s<(1-\rho)s',
\end{equation}
where $C_8=C_4\gamma^{-\frac1\mu}\eta^{-\frac{p+\mu}\mu}
d^\frac{p+\mu+1}\mu(1-{\ae})^{-1}(1-\rho)^{-\frac{p+1}\mu}$, $d=\eta-\ln\lambda_0^{-1}$.
Of course, in the case when the initial upper estimate \eqref{condomega1} is not
satisfied we get $l=0$ and we must begin the proof of theorem \ref{Th.1} directly from
the next step. For this next step of the proof we need to have an upper estimate of
functions $h_j(s)$ for $s>0$, $j\in{N}$ that is analogous to the estimate \eqref{18.2}
for $E_j(s)$. So we return to the relation \eqref{28.9}. Due to the absolute continuity
of $h_j(s)$ and lack of its increase it follows from \eqref{28.9} the relation:
\begin{equation}\label{25.1}
\widetilde{E}_j(s):=h_j(s)+E_j(s)\leq
\overline{C}_1h_{j-1}(s)+\overline{C}_2\Delta_j^\frac1{p+1}
    \biggr(-\frac d{ds}\widetilde{E}_j(s)\biggr)\ \ \ \forall j\in{N}
\end{equation}
for almost all $s\in(0,s_\Omega)$. Due to condition \eqref{global} $\widetilde{E}_j(s)$
satisfies the following analog of the initial condition \eqref{18.1}
\begin{equation}\label{25.2}
\widetilde{E}_j(0)\leq\exp(\eta(j+L+1))\ \ \ \forall j\in{N}.
\end{equation}
It is clear that the following relation that is analogous to \eqref{28.8} holds:
\begin{equation}\label{25.3}
    h_j(s)\leq(1+\gamma)h_{j-1}(s)+ C_3\gamma^{-\frac1{p+1}}\Delta_j^\frac1{p+1}
    \biggr(-\frac d{ds}\widetilde{E}_j(s)\biggr)\ \ \ \forall j\in{N}.
\end{equation}
Therefore repeating all realized steps in the proof of theorem \ref{Th.1} and using
relations \eqref{25.1}, \eqref{25.3}, \eqref{25.2} instead of \eqref{28.7},
\eqref{28.8}, \eqref{18.1}, we obtain the analog of the estimate \eqref{18.20}:
\begin{equation}\label{25.4}
\widetilde{E}\Big(t,\sum_{i=1}^ls_i\Big):=\widetilde{h}\Big(t,\sum_{i=1}^ls_i\Big)+E\Big(t,\sum_{i=1}^ls_i\Big)
\leq C_7^l\prod_{i=1}^lG_i\exp\big(\omega_l(T-t)^{-\frac1{p+\mu}}\big),
\end{equation}
where
\begin{equation*}
\widetilde{h}(t,s):=\sup_{\tau\in(0,t)}\int_{\Omega(s)}|u(\tau,x)|^{p+1}dx.
\end{equation*}
Thus in virtue of relations \eqref{18.19} we can not use estimate \eqref{18.20} (or
\eqref{25.4}) as the initial condition for $(l+1)$-th circle of the iterative estimation
directly. Therefore we will implement some additional trick. Namely, due to
\eqref{18.19} we have
\begin{equation*}
\omega_{l+1}=\omega\zeta^{l+1}<\tilde\omega:=\biggr(\frac{T}{(p+\mu)\sigma(L_0,p+\mu)}\biggr)^{\frac1{p+\mu}}\eta.
\end{equation*}
Therefore we can define a value $t_0^{(l)}>0$ by the relation:
\begin{equation}\label{25.5}
\begin{aligned}
&\omega_{l+1}=\omega\zeta^{l+1}=\biggr(\frac{T-t_0^{(l)}}{(p+\mu)\sigma(L_0,p+\mu)}\biggr)^{\frac1{p+\mu}}\eta\
\ \Leftrightarrow
\\&\omega_l\big(T-t_0^{(l)}\big)^{-\frac1{p+\mu}}=
\frac{\eta}{\zeta\big((p+\mu)\sigma(L_0,p+\mu)\big)^{\frac1{p+\mu}}}:=\Lambda_1.
\end{aligned}
\end{equation}
Now estimate \eqref{25.4} can be written as follows:
\begin{equation}\label{25.6}
\widetilde{E}\big(t,S^{(l)}\big)\leq\begin{cases}
C_7^l\prod_{i=1}^lG_i\exp\big(\omega_l(T-t)^{-\frac1{p+\mu}}\big),&\forall t\in(t_0^{(l)},T);\\
C_7^l\prod_{i=1}^lG_i\exp\Lambda_1,&\forall t\in(0,t_0^{(l)}] \end{cases}
\end{equation}
where $S^{(l)}=\sum_{i=1}^ls_i$. So, estimate \eqref{25.6} is the final estimate of our
solution $u(t,x)$ in the domain $\{(t,x):0<t\leq t_0^{(l)},|x|>S^{(l)}\}$. For the
consideration of a solution $u$ in the domain $\{(t,x):t_0^{(l)}<t<T,|x|>S^{(l)}\}$ we
introduce new energy functions:
\begin{equation}\label{25.7}
\widetilde{h}^{(l)}(t,s):=\sup_{t_0^{(l)}<\tau<t}\int_{\Omega(s)}|u(\tau,x)|^{p+1}dx,\
E^{(l)}(t,s):=\int_{t_0^{(l)}}^t\int_{\Omega(s)}|\nabla_xu(\tau,x)|^{p+1}dxd\tau
\end{equation}
for all $s\geq S^{(l)}$ and $t>t_0^{(l)}$. Now we introduce new shifts
$\big\{\Delta_j^{(l)}\big\}$, $j=1,2,...$ that is similar to \eqref{sequence1}, namely
\begin{equation}\label{25.8}
   \Delta_j^{(l)}:=(p+\mu)\omega_l^{p+\mu}\eta^{-(p+\mu)}(j+L)^{-(p+\mu+1)},\ \ \ j=1,2,...,
\end{equation}
\begin{equation*}
t_j^{(l)}=t_{j-1}^{(l)}+\Delta_j^{(l)},\ \ \ j=1,2,...,\ \ t_0^{(l)} \text{ is from }
\eqref{25.5}.
\end{equation*}
Now condition \eqref{normal1} has the form:
\begin{equation}\label{25.9}
\sum_{i=1}^\infty\Delta_i^{(l)}=T-t_0^{(l)}=
(p+\mu)\eta^{-(p+\mu)}\omega_l^{(p+\mu)}\sigma(L,p+\mu).
\end{equation}
As is easy to see the definition \eqref{25.5} of $t_0^{(l)}$ guarantees the validity of
\eqref{25.9} with some $L>L_0$ defined by the equation:
\begin{equation}\label{25.10}
\zeta\sigma(L_0,p+\mu)^\frac1{p+\mu}=\sigma(L,p+\mu)^\frac1{p+\mu}.
\end{equation}
Introduce now energy functions $h_j^{(l)}(s)$, $E_j^{(l)}(s)$, $j=1,2,...$, connected
with shifts $\big\{\Delta_j^{(l)}\big\}$ from \eqref{25.8}. It is clear that these
functions satisfy ODI similar to \eqref{28.7}, \eqref{28.8}, \eqref{28.9}, namely:
\begin{equation*}
    E_j^{(l)}(s)\leq C_1h_{j-1}^{(l)}(s)+C_2\big(\Delta_j^{(l)}\big)^\frac1{p+1}
    \biggr(-\frac d{ds}E_j^{(l)}(s)\biggr)\ \ \ \forall\,s\in(S^{(l)},s'),
\end{equation*}
\begin{equation*}
    h_j^{(l)}(s)\leq(1+\gamma)h_{j-1}^{(l)}(s)+ C_3\gamma^{-\frac1{p+1}}\big(\Delta_j^{(l)}\big)^\frac1{p+1}
    \biggr(-\frac d{ds}E_j^{(l)}(s)\biggr)\quad\forall\,s\in(S^{(l)},s'),
\end{equation*}
\begin{equation}\label{26.1}
h_j^{(l)}(s)+E_j^{(l)}(s)\leq
\overline{C}_1h_{j-1}^{(l)}(s)+\overline{C}_2\big(\Delta_j^{(l)}\big)^\frac1{p+1}
    \biggr(-\frac d{ds}E_j^{(l)}(s)\biggr)\quad\forall\,j\in{N},
\end{equation}
Moreover, it is easy to see that due to \eqref{25.6} following initial conditions hold:
\begin{equation}\label{26.2}
h_j^{(l)}(S^{(l)})+E_j^{(l)}(S^{(l)})\leq C_7^lG^{(l)}\exp\big(\eta(j+L+1)\big)\ \ \
\forall j\in{N},
\end{equation}
where $G^{(l)}=\prod_{i=1}^lG_i$. Applying lemma \ref{Lem3} to the system \eqref{26.1},
\eqref{26.2} we obtain the estimate:
\begin{equation*}
E_j^{(l)}(S^{(l)}+s)\leq
 C_5C_7^lG^{(l)}\exp\big(\ln\lambda_0^{-1}(j+L)\big)U_{\omega_l}(s)\ \ \ \forall
j\in{N},\ \ \forall\,s:s+S^{(l)}<s',
\end{equation*}
which yields:
\begin{equation}\label{26.3}
E_j^{(l)}(S^{(l+1)})\leq C_5C_7^lG^{(l+1)}\exp\big(\ln\lambda_0^{-1}(j+L)\big).
\end{equation}
Summing these estimates we get:
\begin{equation*}
E(t_i^{(l)},S^{(l+1)})-E(t_0^{(l)},S^{(l+1)})\leq
C_6C_7^lG^{(l+1)}\exp\big(\ln\lambda_0^{-1}(i+L)\big)\ \ \ \forall i\in{N},
\end{equation*}
and using \eqref{25.8} we derive analogously to \eqref{18.17}, \eqref{18.18}:
\begin{equation}\label{26.4}
E\big(t,S^{(l+1)}\big)\leq\begin{cases}
C_7^{l+1}G^{(l+1)}\exp\big(\omega_{l+1}(T-t)^{-\frac1{p+\mu}}\big)+E(t_0^{(l)},S^{(l+1)}),&\forall t\in(t_0^{(l)},T);\\
E(t_0^{(l)},S^{(l+1)}),& \forall t\in(0,t_0^{(l)}]. \end{cases}
\end{equation}
Using \eqref{25.6} we estimate the term $E(t_0^{(l)},S^{(l+1)})$:
\begin{equation*}
E(t_0^{(l)},S^{(l+1)})\leq E(t_0^{(l)},S^{(l)})\leq
C_7^{l}G^{(l)}\exp\big(\omega_{l}(T-t_0^{(l)})^{-\frac1{p+\mu}}\big).
\end{equation*}
Therefore
\begin{equation*}
C_7^{l+1}G^{(l+1)}\exp\big(\omega_{l+1}(T-t)^{-\frac1{p+\mu}}\big)+
E(t_0^{(l)},S^{(l+1)})\leq
C_7^{l+1}G^{(l+1)}\exp\big(\omega_{l+1}(T-t)^{-\frac1{p+\mu}}\big)(1+\varepsilon_l(t)),
\end{equation*}
where
\begin{equation}\label{26.5}
\begin{aligned}
\varepsilon_l(t)&=C_7^{-1}\big(U_{\omega_l}(s_{l+1})\big)^{-1}
\exp\Big[\omega_{l}(T-t_0^{(l)})^{-\frac1{p+\mu}}-\omega_{l+1}(T-t)^{-\frac1{p+\mu}}\Big]\leq
\\&C_7^{-1}\exp\Big[(\omega_{l}-\omega_{l+1})(T-t_0^{(l)})^{-\frac1{p+\mu}}\Big]=
C_7^{-1}\exp\Big[(1-\zeta)\omega_{l}(T-t_0^{(l)})^{-\frac1{p+\mu}}\Big]=
\\&\text{(due to \eqref{25.5})}= C_7^{-1}\exp\Big[(1-\zeta)\Lambda_1\Big]=: C_7^{-1}c_\zeta\ \ \
\forall t\in(t_0^{(l)},T).
\end{aligned}
\end{equation}
Therefore estimate \eqref{26.4} yields:
\begin{equation}\label{26.6}
E\big(t,S^{(l+1)}\big)\leq\begin{cases}
C_9C_7^{l+1}G^{(l+1)}\exp\big(\omega_{l+1}(T-t)^{-\frac1{p+\mu}}\big),&\forall t\in(t_0^{(l)},T);\\
C_7^{l}G^{(l)}\exp\big(\omega_{l}(T-t_0^{(l)})^{-\frac1{p+\mu}}\big),& \forall
t\in(0,t_0^{(l)}], \end{cases}
\end{equation}
where $C_9=1+C_7^{-1}c_\zeta$. Estimate \eqref{26.6} is the starting point for the next
round of the new iterative estimating process. Let us introduce a new initial value
$t_0^{(l+1)}$ by the following analog of \eqref{25.5}:
\begin{equation}\label{26.7}
\omega_{l+1}\big(T-t_0^{(l+1)}\big)^{-\frac1{p+\mu}}=\Lambda_1.
\end{equation}
Then, repeating all computations which led from definition \eqref{25.5} to estimate
\eqref{26.4}, we obtain
\begin{equation}\label{26.8}
E\big(t,S^{(l+2)}\big)\leq\begin{cases}
C_9C_7^{l+2}G^{(l+2)}\exp\big(\omega_{l+2}(T-t)^{-\frac1{p+\mu}}\big)+E\big(t_0^{(l+1)},S^{(l+2)}\big),&\forall t\in(t_0^{(l+1)},T);\\
E\big(t_0^{(l+1)},S^{(l+2)}\big),& \forall t\in(0,t_0^{(l+1)}], \end{cases}
\end{equation}
The term $E\big(t_0^{(l+1)},S^{(l+2)}\big)$ we estimate from above by \eqref{26.6}:
\begin{equation}\label{26.88}
E\big(t_0^{(l+1)},S^{(l+2)}\big)\leq E\big(t_0^{(l+1)},S^{(l+1)}\big)\leq
C_9C_7^{l+1}G^{(l+1)}\exp\big(\omega_{l+1}(T-t_0^{(l+1)})^{-\frac1{p+\mu}}\big).
\end{equation}
Therefore using additionally estimates that are similar to \eqref{26.5} we derive from
\eqref{26.8} the following relation:
\begin{equation}\label{26.9}
E\big(t,S^{(l+2)}\big)\leq\begin{cases}
C_9^2C_7^{l+2}G^{(l+2)}\exp\big(\omega_{l+2}(T-t)^{-\frac1{p+\mu}}\big),&\forall t\in(t_0^{(l+1)},T);\\
C_9C_7^{l+1}G^{(l+1)}\exp\big(\omega_{l+1}(T-t_0^{(l+1)})^{-\frac1{p+\mu}}\big),&
\forall t\in(0,t_0^{(l+1)}]. \end{cases}
\end{equation}
Realizing $k$ such a rounds we arrive at
\begin{equation}\label{26.10}
E\big(t,S^{(l+k)}\big)\leq\begin{cases}
C_9^kC_7^{l+k}G^{(l+k)}\exp\big(\omega_{l+k}(T-t)^{-\frac1{p+\mu}}\big),&\forall t\in(t_0^{(l+k)},T);\\
C_9^{k-1}C_7^{l+k-1}G^{(l+k-1)}\exp\big(\omega_{l+k-1}(T-t_0^{(l+k-1)})^{-\frac1{p+\mu}}\big),&
\forall t\in(0,t_0^{(l+k-1)}], \end{cases}
\end{equation}
where $t_0^{(l+k)}$ are defined by
\begin{equation}\label{26.11}
\omega_{l+k}\big(T-t_0^{(l+k)}\big)^{-\frac1{p+\mu}}=\Lambda_1\ \ \ \forall k\in{N},\
\Lambda_1\text{ is from \eqref{25.5}}.
\end{equation}
Now we have to define the number $k$ of iterations. This number depends on $t$, namely
$k=k(t)$ is defined by relations:
\begin{equation}\label{26.12}
\begin{aligned}
&\omega\zeta^{l+k}\big(T-t\big)^{-\frac1{p+\mu}}\geq\Lambda_1, \
\omega\zeta^{l+k+1}\big(T-t\big)^{-\frac1{p+\mu}}<\Lambda_1\Rightarrow
\\&\omega\zeta^{l+k}\big(T-t\big)^{-\frac1{p+\mu}}=(1+\xi_0)\Lambda_1,\
\xi_0\leq\zeta^{-1}-1\Rightarrow
\\& l+k=\alpha_1+\alpha_2\ln(T-t)^{-1},\quad\alpha_1=l_1\ln\omega+l_2,\ l_1=\big(\ln\zeta^{-1}\big)^{-1},
\\&\,l_2=\frac{\ln\Lambda_1^{-1}+\ln(1+\xi_0)^{-1}}{\ln\zeta^{-1}},\quad\alpha_2=\big((p+\mu)\ln\zeta^{-1}\big)^{-1},
\end{aligned}
\end{equation}
where $\Lambda_1$ is from \eqref{25.5}. Then due to \eqref{26.12} estimate \eqref{26.10}
yields
\begin{equation}\label{26.13}
\begin{aligned}
&E\big(t,S^{(l+k(t))}\big)\leq
(C_9C_7)^{l+k}G^{(l+k)}\exp\big(\omega_{l+k}(T-t)^{-\frac1{p+\mu}}\big)\leq
\\&\leq(C_9C_7)^{\alpha_1+\alpha_2\ln(T-t)^{-1}}G^{(\infty)}(s_1)\exp\big(\Lambda_1(1+\xi_0)\big)\leq
\alpha_3(T-t)^{-\alpha_4}G^{\infty}(s_1),
\\&\alpha_3=l_3\omega^{l_4},\ l_3=\exp\big((1+\xi_0)\Lambda_1\big)(C_7C_9)^{l_2},\ l_4=l_1\ln(C_7C_9).
\end{aligned}
\end{equation}
Using \eqref{18.27}, \eqref{18.30} we derive from \eqref{26.13}:
\begin{equation}\label{26.14}
E\big(t,S^{(\infty)}\big):=E(t,s_1(1-\rho)^{-1})\leq
E\big(t,S^{(l+k(t))}\big)\leq\alpha_3(T-t)^{-\alpha_4}G^{(\infty)}(s_1),
\end{equation}
which yields
\begin{equation}\label{26.15}
E(t,s)\leq\alpha_3(T-t)^{-\alpha_4}G^{(\infty)}(s(1-\rho))=\alpha_3(T-t)^{-\alpha_4}D(s)\
\ \ \forall t<T,\ \forall\,s\in(0,(1-\rho)s'),
\end{equation}
where $D(s)=\exp\Big(C_8\omega^{\frac{p+\mu}\mu}s^{-\frac{p+1}\mu}\Big)$, $C_8$ is from
\eqref{18.29}.

\section{Proof of Theorem \ref{Th.1}: limiting blow-up profile of solution}

Thus we started with the global exponential blow-up estimate \eqref{global} of a
solution $u$ and obtained the improved power-like blow-up estimate \eqref{26.15} of a
solution in an arbitrary strongly interior subdomain $\Omega(s)$. Now starting with this
last estimate we deduce a sharp finite upper energy estimate of a solution $u$ in
$\Omega(s)$. Let us fix an arbitrary $\bar{s}\in(0,(1-\rho)s')$ and introduce a function
\begin{equation}\label{bb1}
   v(t,x):=D(\bar{s})^{-\frac1{p+1}}u(t,x)\ \ \
\forall(t,x)\in(0,T)\times\Omega(\bar{s}),
\end{equation}
where $u$ is a solution under consideration. Let us define
\begin{equation}\label{bb2}
\begin{aligned}
&\overline{E}(t,s):=\int_0^t\int_{\Omega(s)}|\nabla_xv(\tau,x)|^{p+1}dxd\tau=D(\bar{s})^{-1}E(t,s),
\\&\overline{h}(t,s):=\sup_{0<\tau<t}\int_{\Omega(s)}|v(\tau,x)|^{p+1}dx=D(\bar{s})^{-1}h(t,s)\ \ \ \forall s>\bar{s},\ \forall
t<T,
\end{aligned}
\end{equation}
where $E(t,s)$, $h(t,s)$ are energy functions connected with a solution $u$. Let
$\{\overline{E}_j(s)\}$, $\{\overline{h}_j(s)\}$ be families of energy functions
connected with functions from \eqref{bb2} and shifts $\{\Delta_j\}$. It is clear that
these functions satisfy the analogs of relations \eqref{28.7}--\eqref{28.9}, namely
\begin{equation}\label{bb3}
\overline{E}_j(s)\leq C_1\overline{h}_{j-1}(s)+C_2\Delta_j^\frac1{p+1}
    \biggr(-\frac d{ds}\overline{E}_j(s)\biggr),\quad j=1,2,... ,
\end{equation}
\begin{equation}\label{bb4}
\overline{h}_j(s)\leq(1+\gamma)\overline{h}_{j-1}(s)+
C_3\gamma^{-\frac1{p+1}}\Delta_j^\frac1{p+1}
    \biggr(-\frac d{ds}\overline{E}_j(s)\biggr),
\end{equation}
\begin{equation}\label{bb5}
\overline{h}_j(s)+\overline{E}_j(s)\leq
\overline{C}_1\overline{h}_{j-1}(s)+\overline{C}_2\Delta_j^\frac1{p+1} \biggr(-\frac
d{ds}\overline{E}_j(s)\biggr)\quad \forall\text{ a.a. }s\in(\bar{s},(1-\rho)s').
\end{equation}
Moreover due to \eqref{26.14} the following "initial" condition holds:
\begin{equation}\label{init1}
\overline{E}_j(\bar{s})\leq\overline{E}(t_j,\bar{s})\leq\alpha_3(T-t_j)^{-\alpha_4}\ \ \
\forall j\geq1,
\end{equation}
Without loss of the generality we suppose that $\alpha_4>(p+1)^{-1}$. Then introduce
positive numbers $\beta,\xi$:
\begin{equation}\label{11.1}
\beta>\alpha_4,\ \ 0<\xi<1:\ \theta_0:=(1+\gamma)\xi^{\alpha_4-\frac1{p+1}}<1.
\end{equation}
Due to the monotonicity of the function $\overline{E}(T,s)$ we can find
$\bar{\bar{s}}=\bar{\bar{s}}(\bar{s})>\bar{s}$ such that the following inequality holds:
\begin{equation}\label{11.22*}
\overline{E}(T,s)>2\alpha_3T^{-\alpha_4}\xi^{-\beta}\ \ \forall
s\in[\bar{s},\bar{\bar{s}}].
\end{equation}
Really if such a value $\bar{\bar{s}}>\bar{s}$ does not exist then
$E(T,\bar{s})\leq2\alpha_3T^{-\alpha_4}\xi^{-\beta}D(\bar{s})$ and the statement of
theorem \ref{Th.1} is valid for $s=\bar{s}$ with
$c_1=2\alpha_3T^{-\alpha_4}\xi^{-\beta}$ and $c_2=C_8$. Therefore we have to continue
the proof of theorem \ref{Th.1} only if for any $\varepsilon>0$ there exists
$\bar{s}=\bar{s}(\varepsilon)$ such that
\begin{equation*}
E(T,\bar{s})>2\alpha_3T^{-\alpha_4}\xi^{-\beta}D(\bar{s}),\ \
\bar{s}=\bar{s}(\varepsilon)\rightarrow0\text{ as }\varepsilon\rightarrow0.
\end{equation*}
It is clear that for such a value $\bar{s}$ there exists $\bar{\bar{s}}>\bar{s}$ that
satisfies \eqref{11.22*}. Now we introduce a family of continuous functions
\begin{equation}\label{11.23*}
\Gamma_{\tilde{s}}(\cdot):[0,t']\rightarrow[t_1,T]\ \ \
\forall\tilde{s}\in(\bar{s},\bar{\bar{s}}]
\end{equation}
by the following relation:
\begin{equation}\label{11.3}
\big(\Gamma_{\tilde{s}}(t)-t\big)^{-\beta}=\frac{\xi^\beta}{\alpha_3T^{\beta-\alpha_4}}
\Big(\overline{E}(\Gamma_{\tilde{s}}(t),\tilde{s})-\overline{E}(t,\tilde{s})\Big).
\end{equation}
Here $t_1=t_1(\tilde{s})=\Gamma_{\tilde{s}}(0)$ is defined by the equality:
\begin{equation}\label{11.4}
t_1^{-\beta}=\frac{\xi^\beta}{\alpha_3T^{\beta-\alpha_4}}\overline{E}(t_1,\tilde{s}),
\end{equation}
and $t'$ is defined by the relation:
\begin{equation}\label{11.5}
(T-t')^{-\beta}=\frac{\xi^\beta}{\alpha_3T^{\beta-\alpha_4}}
\Big(\overline{E}(T,\tilde{s})-\overline{E}(t',\tilde{s})\Big).
\end{equation}
Due to definition \eqref{11.4}, condition \eqref{11.1} and property \eqref{11.22*} we
have
\begin{equation}\label{11.61}
\overline{E}(t_1,\tilde{s})t_1^{\beta}=\frac{\alpha_3T^{\beta-\alpha_4}}{\xi^\beta}<
2^{-1}\overline{E}(T,\tilde{s})T^{\beta}\ \ \
\forall\tilde{s}\in(\bar{s},\bar{\bar{s}}].
\end{equation}
Therefore due to the strong monotonicity of a function
$\varphi(t):=\overline{E}(t,\tilde{s})t^{\beta}$ it follows that $t_1(\tilde{s})<T$
$\forall\tilde{s}\in(\bar{s},\bar{\bar{s}}]$. Remark that definition \eqref{11.5} yields
\begin{equation}\label{11.6}
t'<T\text{ if }\sup_{t\rightarrow T}\overline{E}(t,\tilde{s})<\infty,
\end{equation}
\begin{equation}\label{11.7}
t'=T\text{ if }\sup_{t\rightarrow T}\overline{E}(t,\tilde{s})=\infty.
\end{equation}
Now we can conclude that the function $\Gamma_{\tilde{s}}(\cdot)$ determines a strongly
monotonically increasing sequence $\{t_j\}$ by the following relation:
\begin{equation}\label{11.9}
t_j:=\Gamma_{\tilde{s}}(t_{j-1}),\ \ \ j=1,2,...,\ t_0=0.
\end{equation}
Moreover this sequence is infinite and $t_j\rightarrow T$ as $j\rightarrow\infty$ in the
case \eqref{11.7}. In the case \eqref{11.6} this sequence is finite and there exists a
number $j_0$ such that
\begin{equation}\label{11.8}
t_{j_0}=\Gamma_{\tilde{s}}(t_{j_0-1})>t',\ \ \ t_{j_0-1}\leq t'.
\end{equation}
Now we introduce new shifts $\{\Delta_j\}=\{\Delta_j(\tilde{s})\}$ for the system
\eqref{28.7}, \eqref{28.8}, namely
\begin{equation}\label{11.10}
\Delta_j=\Delta_j(\tilde{s})=t_j-t_{j-1}, \ \ \ \{t_j\}\text{ is from \eqref{11.9}}.
\end{equation}
Due to definition \eqref{11.3} of the function $\Gamma_{\tilde{s}}(t)$ and the estimate
\eqref{26.14} (and consequently \eqref{26.15}) the following inequalities hold:
\begin{equation*}
\Delta_j^{-\beta}=\frac{\xi^{\beta}}{\alpha_3T^{\beta-\alpha_4}}\big(\overline{E}(t_j,\tilde{s})-\overline{E}(t_{j-1},\tilde{s}))\big)\leq
\frac{\xi^{\beta}\alpha_3(T-t_j)^{-\alpha_4}}{\alpha_3T^{\beta-\alpha_4}}
\leq\xi^{\beta}T^{\alpha_4-\beta}(T-t_j)^{-\alpha_4}\ \ \ \forall j\in{N}.
\end{equation*}
Therefore,
\begin{equation}\label{11.11}
\begin{aligned}
&\Delta_j=\Delta_j(\tilde{s})\geq\xi^{-1}T^{1-\frac{\alpha_4}\beta}(T-t_j)^{\frac{\alpha_4}\beta}=
\xi^{-1}T^{1-\frac{\alpha_4}\beta}\biggr(\frac{(T-t_j)}{T}\biggr)^{\frac{\alpha_4}\beta}T^{\frac{\alpha_4}\beta}\geq
\\&\geq\xi^{-1}T\biggr(\frac{(T-t_j)}{T}\biggr)\geq \xi^{-1}\Delta_{j+1}\ \ \ \forall
j\in{N},\,\forall\,\tilde{s}\in(\bar{s},\bar{\bar{s}}).
\end{aligned}
\end{equation}
Let now $\overline{E}_j(s)$, $\overline{h}_j(s)$ be energy functions of our solution
$u$, corresponding to the sequence of shifts \eqref{11.10}. These functions satisfy
system \eqref{28.7}, \eqref{28.8} for almost all $s\in(\tilde{s},s')$. Now introduce new
energy functions $\overline{A}_j(s)$, $\overline{H}_j(s)$ by the relations
\begin{equation}\label{11.12}
\overline{A}_j(s):=\Delta_j(\tilde{s})^{\alpha_4}\overline{E}_j(s),\
\overline{H}_j(s):=\Delta_j(\tilde{s})^{\alpha_4}\overline{h}_j(s)\ \ \ \forall
s>\tilde{s},\ j\in{N}.
\end{equation}
It is easy to see that these functions satisfy the following inequalities for almost all
$s\in(\tilde{s},s')$:
\begin{equation}\label{11.13}
\begin{aligned}
&\overline{A}_j(s)\leq
C_1\biggr(\frac{\Delta_j}{\Delta_{j-1}}\biggr)^{\alpha_4}\overline{H}_{j-1}(s)+C_2\Delta_j^\frac1{p+1}
\biggr(-\frac d{ds}\overline{A}_j(s)\biggr)\ \ \ \forall j\in{N},
\\&\overline{H}_i(s)\leq
\lambda_i\overline{H}_{i-1}(s)+C_3\gamma^{-\frac1{p+1}}\Delta_i^\frac1{p+1}
\biggr(-\frac d{ds}\overline{A}_i(s)\biggr)\ \ \ \forall i\in{N},
\end{aligned}
\end{equation}
where $\lambda_i=(1+\gamma)\Big(\frac{\Delta_i}{\Delta_{i-1}}\Big)^{\alpha_4}$. It is
easy to check that
\begin{equation}\label{11.14}
\lambda_j\lambda_{j-1}...\lambda_{i+1}\Delta_i^\frac1{p+1}=
(1+\gamma)^{j-i}\Delta_j^\frac1{p+1}
\biggr(\frac{\Delta_j}{\Delta_{i}}\biggr)^{\alpha_4-\frac1{p+1}}.
\end{equation}
Using relation \eqref{11.14} and iterating system \eqref{11.13} we get the following
inequalities:
\begin{equation}\label{11.13*}
\begin{aligned}
&\overline{A}_j(s)\leq
C_1(1+\gamma)^{j-1}\biggr(\frac{\Delta_j}{\Delta_0}\biggr)^{\alpha_4}\overline{H}_0(s)+
\\&+\overline{C}_3\gamma^{-\frac1{p+1}}\Delta_j^\frac1{p+1}
\biggr(\sum_{i=1}^j(1+\gamma)^{j-i}\biggr(\frac{\Delta_j}{\Delta_i}\biggr)^{\alpha_4-\frac1{p+1}}
\big(-\overline{A}'_i(s)\big)\biggr)\ \ \ \forall s\in(\tilde{s},s'),
\end{aligned}
\end{equation}
where $\overline{C}_3=\max\{C_2\gamma^\frac1{p+1},(1+\gamma)^{-1}C_1C_3\}$,
$\overline{H}_0(s)=\Delta_0^{\alpha_4}\overline{h}_0(s)$, $\Delta_0=\xi^{-1}\Delta_1$.
Now introduce one more family of energy functions:
\begin{equation}\label{11.17}
U_j(s):=
\sum_{i=1}^j(1+\gamma)^{j-i}\biggr(\frac{\Delta_j}{\Delta_i}\biggr)^{\alpha_4-\frac1{p+1}}\overline{A}_i(s),
\ \ \ j=1,2,...,\ \forall s>\tilde{s}.
\end{equation}
As is easily verified that these functions satisfy relations:
\begin{equation}\label{11.18}
U_j(s)-\overline{A}_j(s)=\theta^{(j)}U_{j-1}(s),\ \ \ j=1,2,...,
\end{equation}
where due to \eqref{11.11}
$\theta^{(j)}:=(1+\gamma)\big(\frac{\Delta_j}{\Delta_{j-1}}\big)^{\alpha_4-\frac1{p+1}}
\leq(1+\gamma)\xi^{\alpha_4-\frac1{p+1}}=\theta_0$ with $\theta_0$ from \eqref{11.1}.
Using \eqref{11.1} we derive from \eqref{11.18}:
\begin{equation}\label{11.19}
U_j(s)\leq
C_1T^{\alpha_4}\bar{\lambda}^{j-1}\overline{h}_0(s)+\theta_0U_{j-1}(s)+\overline{C}_3\gamma^{-\frac1{p+1}}
\Delta_j^{\frac1{p+1}}(-U'_j(s))\quad\forall\text{ a.a. }s\in(\tilde{s},s'),
\end{equation}
where $\bar{\lambda}=(1+\gamma)\xi^{\alpha_4}<1$. Let us estimate from above the
"initial" value $U_j(\tilde{s})$ of the function $U_j(s)$. Due to definitions
\eqref{11.10} and \eqref{11.3} we get:
\begin{equation}\label{11.20}
\begin{aligned}
&U_j(\tilde{s})=\sum_{i=1}^j(1+\gamma)^{j-i}\biggr(\frac{\Delta_j}{\Delta_i}\biggr)^{\alpha_4-\frac1{p+1}}
{\Delta_i}^{\alpha_4}\overline{E}_i(\tilde{s})=
\\&=\alpha_3T^{\beta-\alpha_4}\xi^{-\beta}
\sum_{i=1}^j(1+\gamma)^{j-i}\biggr(\frac{\Delta_j}{\Delta_i}\biggr)^{\alpha_4-\frac1{p+1}}
{\Delta_i}^{-\beta}{\Delta_i}^{\alpha_4}=
\\&=\alpha_3T^{\beta-\alpha_4}\xi^{-\beta}{\Delta_j}^{-(\beta-\alpha_4)}
\sum_{i=1}^j(1+\gamma)^{j-i}\biggr(\frac{\Delta_j}{\Delta_i}\biggr)^{\beta-\frac1{p+1}}
\leq\alpha_3(1-\theta_0)^{-1}T^{\beta-\alpha_4}\xi^{-\beta}{\Delta_j}^{-(\beta-\alpha_4)}.
\end{aligned}
\end{equation}
It is easy to verify that functions
$\overline{U}_j(s):=U_j(s)-(1-\theta_0)^{-1}C_1T^{\alpha_4}\overline{h}_0(\tilde{s})$
satisfy the homogeneous ODI \eqref{11.19} for almost all $s\in(\tilde{s},s')$:
\begin{equation}\label{11.21}
\overline{U}_j(s)\leq \theta_0\overline{U}_{j-1}(s)+\overline{C}_3
\Delta_j^{\frac1{p+1}}(-\overline{U}'_j(s)).
\end{equation}
Moreover these functions $\overline{U}_j(s)$ satisfy the "initial" condition
\eqref{11.20}:
\begin{equation*}
\overline{U}_j(\tilde{s})\leq\alpha_3(1-\theta_0)^{-1}T^{\beta-\alpha_4}
\xi^{-\beta}{\Delta_j}^{-(\beta-\alpha_4)}.
\end{equation*}
Therefore in virtue of lemmas 9.2.7 --- 9.2.9 from \cite{KSSh} functions
$\overline{U}_j(s)$ satisfy the following uniform with respect to $j\in{N}$ estimate:
\begin{equation}\label{11.22}
\overline{U}_j(s)\leq\alpha_3C_{10}\overline{U}(s-\tilde{s})\ \ \ \forall
s\in(\tilde{s},s'),\ j=1,2,...,
\end{equation}
where $C_{10}=(1-\theta_0)^{-1}T^{\beta-\alpha_4}\xi^{-\beta}
\big(\overline{C}_3(\beta-\alpha_4)(p+1)e^{-1}(1-\theta_0)^{-1}\big)^{(\beta-\alpha_4)(p+1)}$,
$\overline{U}(s):=s^{-(\beta-\alpha_4)(p+1)}$ and consequently:
\begin{equation}\label{11.23}
U_j(s)\leq C_{11}+\alpha_3C_{10}\overline{U}(s-\tilde{s})\ \ \ \forall
s\in(\tilde{s},s'),
\end{equation}
where $C_{11}=(1-\theta_0)^{-1}C_1T^{\alpha_4}h_0(\tilde{s})$. Let us define a value
$s^{(1)}>0$ by the relation:
\begin{equation}\label{11.24}
\overline{U}(s)\geq\alpha_3^{-1}C_{11}C_{10}^{-1}\ \ \ \forall s\leq
s^{(1)}\quad\Rightarrow\quad
s^{(1)}=\left(\alpha_3C_{10}C_{11}^{-1}\right)^\frac1{(\beta-\alpha_4)(p+1)}.
\end{equation}
Then it follows from \eqref{11.23} that:
\begin{equation}\label{11.25}
\overline{A}_j(s)\leq U_j(s)\leq2\alpha_3C_{10}\overline{U}(s-\tilde{s})\ \ \ \forall
s\in(\tilde{s},s''),\,s''=\min(s',s^{(1)}+\tilde{s}),\,j=1,2,...,
\end{equation}
which yields in virtue of \eqref{11.12} and \eqref{11.3}:
\begin{equation}\label{11.26}
\overline{E}_j(s)\leq 2\alpha_3C_{10}\overline{U}(s-\tilde{s})
\big(\Delta_j(\tilde{s})\big)^{-\alpha_4}\ \ \ \forall
\tilde{s}\in(\bar{s},\bar{\bar{s}}], \ \forall s\in(\tilde{s},s''),
\end{equation}
Now we sum inequalities \eqref{11.26} from $j=1$ up to $j=i$. Using the property
\eqref{11.11} we get:
\begin{equation*}
\overline{E}(t_i,s)\leq2\alpha_3C_{10}\overline{U}(s-\tilde{s})
\sum_{j=1}^i\Delta_j^{-\alpha_4}\leq
2\alpha_3C_{10}\overline{U}(s-\tilde{s})\Delta_i^{-\alpha_4}(1-\xi^{\alpha_4})^{-1}.
\end{equation*}
Using additionally definitions \eqref{11.3}, \eqref{11.10} we derive from the last
inequality:
\begin{equation*}
\overline{E}(t_i,s)\leq C_{12}\alpha_3^\frac{\beta-\alpha_4}\beta
\overline{U}(s-\tilde{s}) \overline{E}_i(\tilde{s})^\frac{\alpha_4}\beta,\ \
C_{12}=2(1-\xi^{\alpha_4})^{-1}C_{10}\xi^{\alpha_4}T^{-\frac{(\beta-\alpha_4)\alpha_4}\beta}\alpha_3^{-\frac{\alpha_4}{\beta}},
\end{equation*}
and as consequence,
\begin{equation}\label{11.27}
\overline{E}(t_i,s)\leq C_{12}\alpha_3^\frac{\beta-\alpha_4}\beta
\overline{U}(s-\tilde{s}) \overline{E}(t_i,\tilde{s})^\frac{\alpha_4}\beta\ \ \ \forall
\tilde{s}\in(\bar{s},s^{(3)}), \ \forall s\in(\tilde{s},s'''),\ i=1,2,...\ ,
\end{equation}
where $s'''=\min(s',s^{(1)})=\min(s^{(1)},s_\Omega,s_\omega)\leq s''$,
$s^{(3)}=\min(\bar{\bar{s}},s''')=\min(\bar{\bar{s}},s^{(1)},s_\Omega,s_\omega)$. Now we
have to establish the relation of the type \eqref{11.27} not only for
$t=t_i(\tilde{s})$, $i=1,2,...$, but also for an arbitrary $t<T$. Firstly we consider a
value $t_1$ defined by the equality \eqref{11.4} as a function $t_1=t_1(\tilde{s})$. Due
to the monotonic increasing of the energy function $E(t,s)$ with respect to $t$ and its
monotonic decreasing with respect to $s$, definition \eqref{11.4} guarantees that
$t_1(\tilde{s})$ is the monotonically increasing function. Moreover, it follows from
\eqref{11.61} that
\begin{equation}\label{15.1}
t_1(\tilde{s})<T\ \ \ \forall\tilde{s}\in(\bar{s},s^{(3)}),
\end{equation}
where $s^{(3)}$ is from \eqref{11.27}. For an arbitrary $\tilde{s}$ from \eqref{15.1}
let $\{t_i\}=\{t_i(\tilde{s})\}$ be the sequence defined by \eqref{11.9}. It is easy to
see that the mapping $\Gamma_{\tilde{s}}(\cdot)$ defined by \eqref{11.3} maps the
segment $[t_{i-1}(\tilde{s}),t_i(\tilde{s})]$ into
$[t_{i}(\tilde{s}),t_{i+1}(\tilde{s})]$ continuously, monotonically and bijectively for
any $i\geq1$ in the case \eqref{11.7} and for any $i:1\leq i\leq j_0-1$
 in the case \eqref{11.6}. Moreover in the case \eqref{11.6} it also maps bijectively
 segment $[t_{j_0-1},\Gamma_{\tilde{s}}^{-1}(t')]$ into $[t_{j_0},T]$. Let now $t$ be an
 arbitrary point from the interval $(t_1(\tilde{s}),T)$.
For the definiteness we can assume that
 $t\in(t_{k-1},t_k)$ with some $k\in{N}$ in the case \eqref{11.7} or  $t\in(t_{k-1},t_k)$ with some $k\leq
 j_0$ or $t\in(t_{j_0},T)$ in the case \eqref{11.6}. Due to bijectivity of the map
 $\Gamma_{\tilde{s}}(\cdot)$ we can reconstruct the finite sequence
 $\{\bar{t}_i(\tilde{s})\}$ as follows:
 \begin{equation}\label{15.3}
\bar{t}_i:=\Gamma_{\tilde{s}}^{-1}(\bar{t}_{i+1}),\ \ i=k,k-1,\ldots,0, \text{ where
}\bar{t}_{k+1}:=t,\ \bar{t}_{i}\in(t_i,t_{i+1}),\ \bar{t}_0\in(0,t_1(\tilde{s})).
 \end{equation}
Hence we obtain the following sequence of shifts $\bar{\Delta}_j(\tilde{s})$:
\begin{equation*}
\bar{\Delta}_j^{-\beta}:=(\bar{t}_{j}-\bar{t}_{j-1})^{-\beta}=\frac{\xi^\beta
T^{\alpha_4-\beta}}{\alpha_3}
\Big(\overline{E}(\bar{t}_{j},\tilde{s})-\overline{E}(\bar{t}_{j-1},\tilde{s})\Big).
\end{equation*}
As above in \eqref{11.11} we show that these shifts satisfy the inequality
$\bar{\Delta}_{j+1}<\xi\bar{\Delta}_j$ $\forall j\leq k$. Using these shifts we
introduce new energy functions $\overline{E}_j^{(1)}(s)$, $\overline{h}_j^{(1)}(s)$.
Using these energy functions and repeating all computations which led from the estimate
\eqref{26.15} to the relation \eqref{11.27}, we arrive at:
\begin{equation*}
\overline{E}(\bar{t}_i,s)-\overline{E}(\bar{t}_0,s)\leq
C_{12}\alpha_3^\frac{\beta-\alpha_4}\beta \overline{U}(s-\tilde{s})
\overline{E}(\bar{t}_i,\tilde{s})^\frac{\alpha_4}\beta,\ \ \ \forall i\leq k+1,
\end{equation*}
and as consequence for $i=k+1$:
\begin{equation}\label{15.4}
\overline{E}(t,s)-\overline{E}(\bar{t}_0,s)\leq
C_{12}\alpha_3^\frac{\beta-\alpha_4}\beta \overline{U}(s-\tilde{s})
\overline{E}(t,\tilde{s})^\frac{\alpha_4}\beta\ \ \
\forall\,\tilde{s}\in(\bar{s},s^{(3)}),\,\forall s\in(\tilde{s},s''').
\end{equation}
Remark that a point $t$ in the last inequality is arbitrary: $t<T$. Since
$\bar{t}_0(\tilde{s})<t_1(\tilde{s})$ then summing inequality \eqref{11.27} by $i=1$ and
inequality \eqref{15.4}  we obtain the following relation:
\begin{equation}\label{15.5}
\begin{aligned}
&\overline{E}(t,s)\leq
\overline{E}(t,s)-\overline{E}(\bar{t}_0(\tilde{s}),s)+\overline{E}(t_1(\tilde{s}),s)\leq
C_{12}\alpha_3^\frac{\beta-\alpha_4}\beta\overline{U}(s-\tilde{s})\times
\\&\times\big(\overline{E}(t,\tilde{s})^\frac{\alpha_4}\beta+\overline{E}(t_1(\tilde{s}),\tilde{s})^\frac{\alpha_4}\beta\big)
<2C_{12}\alpha_3^\frac{\beta-\alpha_4}\beta \overline{U}(s-\tilde{s})
\overline{E}_(t,\tilde{s})^\frac{\alpha_4}\beta,
\end{aligned}
\end{equation}
which is true for an arbitrary $t<T$ and arbitrary $s,\tilde{s}:\bar{s}<\tilde{s}<s\leq
s^{(3)}$. Let us introduce new variables: $v=s-\bar{s}$, $w=\tilde{s}-\bar{s}>0$ and
shifted energy function $\overline{E}_{\bar{s}}(t,v):=\overline{E}(t,\bar{s}+v)$. Then
the relation \eqref{15.5} can be written as follows:
\begin{equation}\label{Stamp1}
\overline{E}_{\bar{s}}(t,v)\leq
c\overline{U}(v-w)\overline{E}_{\bar{s}}(t,w)^\frac{\alpha_4}\beta\quad
\forall\,0<w<v<s^{(3)}-\bar{s},\,\forall\,t<T,
\end{equation}
with $c=2C_{12}\alpha_3^\frac{\beta-\alpha_4}\beta$. Due to Stampacchia lemma
(\cite{Stamp}, see also lemma 9.3.2 in \cite{KSSh}) and the structure \eqref{11.22} of
the function $\overline{U}(\cdot)$ the relation \eqref{Stamp1} yields the following
uniform with respect to $t<T$ estimate
$\overline{E}_{\bar{s}}(t,v)\leq\alpha_3C_{13}v^{-\beta(p+1)}$ $\forall\,v\leq
s^{(3)}-\bar{s}$, $\forall\,t<T$, which yields:
\begin{equation}\label{15.6}
\overline{E}(t,s)\leq\alpha_3C_{13}(s-\bar{s})^{-\beta(p+1)}\ \ \ \forall\,t<T,\,\forall
s:\bar{s}<s<s^{(3)},
\end{equation}
where $C_{13}:=2^{(p+1)\beta^2}C_{12}^\frac\beta{\beta-\alpha_4}$. Inserting expressions
\eqref{bb2} for $\overline{E}(t,s)$ and \eqref{26.15} for $D(s)$, we derive from
\eqref{15.6}:
\begin{equation}\label{15.7}
\begin{aligned}
&E(t,s)\leq\alpha_3 C_{13}f(\bar{s},s):=
\alpha_3C_{13}\exp\big(C_8\omega^\frac{p+\mu}\mu\bar{s}^{-\frac{p+1}\mu}\big)(s-\bar{s})^{-\beta(p+1)}\quad\forall
s\in(\bar{s},s^{(3)}),
\\&\forall\,\bar{s}<s^{(3)},\,\forall\,t\leq T.
\end{aligned}
\end{equation}
Let us rewrite estimate \eqref{15.7} in the form:
\begin{equation}\label{ar1}
\begin{aligned}
&E(t,s)\leq\alpha_3C_{13}\omega^{-\beta(p+\mu)}\varphi(\bar{r},r),\ \
r:=s\omega^{-\frac{p+\mu}{p+1}},
\\&\bar{r}:=\bar{s}\omega^{-\frac{p+\mu}{p+1}},\ \
\varphi(\bar{r},r):=\exp\big(C_8\bar{r}^{-\frac{p+1}\mu}\big)(r-\bar{r})^{-\beta(p+1)}.
\end{aligned}
\end{equation}
As is easy to see for an admissible value $s$ in the estimate \eqref{15.7} the following
restriction holds: $s\leq s_\omega=b_1\omega^{\frac{p+\mu}{p+1}}$, where
$b_1=\left(\frac{B_1}{B_2}\right)^\frac\mu{p+1}$. Therefore $s\leq
b_1\omega^{\frac{p+\mu}{p+1}}$ and, consequently $r=s\omega^{-\frac{p+\mu}{p+1}}\leq
b_1$. Let us fix an arbitrary point $s<s^{(3)}$. Then only two cases are possible with
respect to a point $\bar{s}=2^{-1}s$, namely
\\1) $E(T,\bar{s})\leq2\alpha_3T^{-\alpha_4}\xi^{-\beta}D(\bar{s})$, 2)
$E(T,\bar{s})>2\alpha_3T^{-\alpha_4}\xi^{-\beta}D(\bar{s})$.
\\In the case 1) we have the following estimate $E(T,s)\leq
E(T,\bar{s})\leq2\alpha_3T^{-\alpha_4}\xi^{-\beta} \exp\big(2^\frac{p+1}\mu
C_8\omega^{\frac{p+\mu}\mu}s^{-\frac{p+1}\mu}\big)$, which corresponds to the desired
estimate \eqref{main1} in a point $s$. And we have also two possibilities in the case
2), namely
\\a) there is $\bar{\bar{s}}=\bar{\bar{s}}(\bar{s})<s$ such that
$E(T,\bar{\bar{s}})=2\alpha_3T^{-\alpha_4}\xi^{-\beta}D(\bar{s})$ and
$E(T,s')>2\alpha_3T^{-\alpha_4}\xi^{-\beta}D(\bar{s})$
$\forall\,s'\in[\bar{s},\bar{\bar{s}})$,
\\b) $E(T,s')>2\alpha_3T^{-\alpha_4}\xi^{-\beta}D(\bar{s})\quad\forall\,s'\leq s$.
\\In the case a) we have the following estimate:
\begin{equation}\label{31.10}
E(T,s)\leq E(T,\bar{\bar{s}})=2\alpha_3T^{-\alpha_4}\xi^{-\beta}D(\bar{s})=
2\alpha_3T^{-\alpha_4}\xi^{-\beta} \exp\big(2^\frac{p+1}\mu
C_8\omega^{\frac{p+\mu}\mu}s^{-\frac{p+1}\mu}\big),
\end{equation}
which corresponds to the estimate in the case 1). Finally in the case b) the estimate
\eqref{15.7} yields:
\begin{equation}\label{ar5}
\begin{aligned}
&E(t,s)\leq \alpha_3C_{13}\omega^{-\beta(p+1)}
\exp\big(C_8\bar{r}^{-\frac{p+1}\mu}\big)(r-\bar{r})^{-\beta(p+1)}=
\\&=\alpha_3C_{13}\omega^{-\beta(p+1)}
\exp\big(2^\frac{p+1}\mu C_8r^{-\frac{p+1}\mu}\big)2^{\beta(p+1)}r^{-\beta(p+1)}\leq
\\&\leq\alpha_3\omega^{-\beta(p+1)}2^{\beta(p+1)}C_{13}\tilde{k}
\exp\big(2^\frac{p+\mu+1}\mu C_8r^{-\frac{p+1}\mu}\big),
\end{aligned}
\end{equation}
where $\tilde{k}=\max_{r\leq b_1}\frac{r^{-\beta(p+1)}} {\exp\big(2^\frac{p+\mu+1}\mu
C_8r^{-\frac{p+1}\mu}\big)}$. Finally combining estimates \eqref{31.10}, \eqref{ar5} we
get
\begin{equation}\label{ar6}
\begin{aligned}
&E(t,s)\leq E_0(t,s):=C_{15}\exp\big(2^\frac{p+\mu+1}\mu
C_8\omega^{\frac{p+\mu}\mu}s^{-\frac{p+1}\mu}\big)\ \ \ \forall t\leq T,\ \forall
s:0<s<\min(s_\Omega,s_\omega,s^{(1)})
\\&C_{15}=\alpha_3\big(2T^{-\alpha_4}\xi^{-\beta}+2^{\beta(p+1)}C_{13}\tilde{k}\omega^{-\beta(p+1)}\big).
\end{aligned}
\end{equation}
In view of the above values of the constants $s_\Omega,\,s_\omega,\,s^{(1)}$ the last
estimate yields the desired estimate \eqref{main1} for the energy function $E(t,s)$ of a
solution $u$. The required estimate of $h(t,s)$ follows from the following simple
computation. Let us fix $0<\lambda=const<1,\,s>0$ and let $\xi_{\lambda,s}(r)$ be a
Lipschitz cut-off function:
\begin{equation*}
\begin{aligned}
&\xi_{\lambda,s}(r)=0\quad\text{if }r\leq\lambda s,\quad
\xi_{\lambda,s}(r)=1\quad\text{if }r\geq s,
\\&\xi_{\lambda,s}(r)=(r-\lambda s)(s(1-\lambda))^{-1}\quad\text{if }\lambda s<r<s.
\end{aligned}
\end{equation*}
Then inserting the function $\eta(t,x)=u(t,x)\xi_{\lambda,s}^\frac{p+1}p(d(x))$ for the
integral identity as a test function, we obtain after standard calculations the
following inequality:
\begin{equation*}
\begin{aligned}
&\frac p{p+1} \int_{\Omega(\lambda s)}|u(t,x)|^{p+1}\xi_{\lambda,s}^\frac{p+1}pdx+
d_0\int_0^t\int_{\Omega(\lambda s)}
|\nabla_xu(\tau,x)|^{p+1}\xi_{\lambda,s}^\frac{p+1}pdxd\tau\leq
\\&\leq d_1\left(\int_0^t\int_{\Omega(\lambda s)\setminus\Omega(s)}
|\nabla_xu|^{p+1}dxd\tau\right)^\frac p{p+1} \max_{0\leq\tau\leq t}
\left(\int_{\Omega(\lambda s)\setminus\Omega(s)}
|u(\tau,x)|^{p+1}\xi_{\lambda,s}^\frac{p+1}pdx\right)^\frac1{p+1}\times
\\&\times t^\frac1{p+1}
\max_{\Omega(\lambda s)\setminus\Omega(s)}|\nabla\xi_{\lambda,s}(d(x))| +\frac
p{p+1}\int_{\Omega(\lambda s)}|u(0,x)|^{p+1}\xi_{\lambda,s}^\frac{p+1}pdx,
\end{aligned}
\end{equation*}
which leads due to Young inequality to the estimate:
\begin{equation*}
\begin{aligned}
&\left(\frac p{p+1}-\varepsilon\right)\max_{0\leq\tau\leq t} \int_{\Omega(\lambda
s)}|u(\tau,x)|^{p+1}\xi_{\lambda,s}^\frac{p+1}pdx\leq \frac p{p+1}h_0(\lambda s)+
\\&+C(\varepsilon)\left(\frac{d_1T^\frac1{p+1}}{s(1-\lambda)}\right)^\frac{p+1}p
\int_0^t\int_{\Omega(\lambda s)\setminus\Omega(s)} |\nabla_xu|^{p+1}dxd\tau\quad
\forall\,t<T,\,\forall\,\varepsilon>0.
\end{aligned}
\end{equation*}
Due to \eqref{ar6} this estimate with $\varepsilon=\frac p{2(p+1)}$ yields:
\begin{equation*}
\frac p{2(p+1)}h(t,s)\leq\frac p{p+1}h_0(\lambda s)+ C\left(\frac
p{2(p+1)}\right)\left(\frac{d_1T^\frac1{p+1}}{s(1-\lambda)}\right)^\frac{p+1}p
E_0(t,\lambda s).
\end{equation*}
Optimizing this last estimate with respect to a free parameter $\lambda<1$ we derive the
desired estimate for $h(t,s).$

\section{Propagation of singularities of large solutions: proof of Theorem \ref{Th.2}}

\begin{definition}\label{def2}
A function $u(t,x)\in C_{loc}((0,T);L^{p+1}(\Omega))$ is called a weak (energy) solution
of equation~\eqref{da1} if:
\begin{equation*}
\begin{aligned}
&u(t,x)\in L^{p+1}_{loc}\left((0,T);W^{1,p+1}_{loc}(\Omega)\right)\cap
L^{\lambda+1}_{loc}\left((0,T)\times\Omega\right),
\\&(|u|^{p-1}u)_t\in L^{\frac{p+1}p}_{loc}\left((0,T);(W^{1,p+1}_{c}(\Omega))^*\right)+
L^{\frac{\lambda+1}\lambda}_{loc}\left((0,T);(L^{\lambda+1}_{c}(\Omega))^*\right)
\end{aligned}
\end{equation*}
and the following integral identity holds:
\begin{equation}\label{4.1}
\int_a^b\langle(|u|^{p-1}u)_t,\eta\rangle dt+
\int_a^b\int_{\Omega}\biggr[\sum_{i=1}^na_i(... ,\nabla
u)\eta_{x_i}+b(t,x)|u|^{\lambda-1}u\eta\biggr]dxdt=0
\end{equation}
for arbitrary $0<a<b<T$ and an arbitrary
\begin{equation*}
\eta(t,x)\in L^{p+1}_{loc}\left((0,T);W^{1,p+1}_{c}(\Omega)\right)\cap
L^{\lambda+1}_{loc}\left((0,T);L^{\lambda+1}_{c}(\Omega)\right),
\end{equation*}
where $W^{1,p+1}_{c}(\Omega)$, $L^{ \lambda+1}_{c}(\Omega)$\ are subspaces of
$W^{1,p+1}(\Omega)$, $L^{\lambda+1}(\Omega)$ \ of functions with the compact support in
$\Omega$, and $<,>$ is the pairing of elements from $W^{1,p+1}_{c}(\Omega)\cap L^{
\lambda+1}_{c}(\Omega)$\ and $\left(W^{1,p+1}_{c}(\Omega)\cap L^{
\lambda+1}_{c}(\Omega)\right)^*$.
\end{definition}
Let $\Omega(s)$ be a family of subdomains from \eqref{fam1}. Let us introduce an
additional family of cylindrical subdomains of $Q$:
\begin{equation}\label{4.3}
Q_\tau(s):=(s^q,\tau)\times\Omega(s)\ \ \ \forall s\in(0,s_\Omega),\ \forall \tau<T,\
1<q=const<1+\frac{p(\lambda+1)}{p+1}.
\end{equation}
Now we define the following energy functions connected with a solution $u$ of equation
\eqref{da1} under consideration:
\begin{equation}\label{4.4}
h^{(u)}_\tau(s)=h_\tau(s):=\int_{\Omega(s)}|u(\tau,x)|^{p+1}dx,\quad 0<\tau<T,
\,0<s<s_\Omega,
\end{equation}
\begin{equation}\label{4.5}
E^{(u)}_\tau(s)=E_\tau(s):=\int^\tau_{s^q}\int_{\Omega(s)}(|\nabla_x
u(t,x)|^{p+1}+a_1(t)g_1(d(x))|u|^{\lambda+1})dxdt.
\end{equation}
\begin{lemma}\label{Lem2.1}
Let $u$ be a solution of equation \eqref{da1}. Then energy functions \eqref{4.4},
\eqref{4.5} satisfy the following relation:
\begin{equation}\label{4.55}
\begin{aligned}
&B_\tau(s):=h_\tau(s)+E_\tau(s)\leq
C_1g_1(s)^{-\frac{1}{\lambda+1}}\Phi(\tau)^\frac{\lambda-p}{(\lambda+1)(p+1)}
\biggr(-\frac{d}{ds}E_\tau(s)\biggr)^\frac{1+p(\lambda+2)}{(\lambda+1)(p+1)}+
\\&+C_2g_1(s)^{-\frac{p+1}{\lambda+1}}
\biggr(-\frac{d}{ds}E_\tau(s)\biggr)^\frac{p+1}{\lambda+1}s^{-\frac{(q-1)(p+1)}{\lambda+1}}\
\ \ \text{for a.a. }s\in(0,s_\Omega),\,\text{where }\Phi(\tau)=\int_0^\tau
a_1(t)^{-\frac{p+1}{\lambda+1}}dt.
\end{aligned}
\end{equation}
\end{lemma}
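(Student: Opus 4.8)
The plan is to apply the local energy method directly to equation \eqref{da1}, testing the weak identity \eqref{4.1} with (a Steklov average and a spatial cut-off of) the solution $u$ itself on the cylinder $Q_\tau(s)=(s^q,\tau)\times\Omega(s)$, and to use the absorption term as an extra source of coercivity that is already built into $E_\tau(s)$. Concretely, in the first step I plug $\eta=u\,\xi_\delta(d(x))$ into \eqref{4.1} on the interval $(s^q,\tau)$, with $\xi_\delta$ Lipschitz, $\equiv1$ on $\{d>s\}$ and $\equiv0$ on $\{d<s-\delta\}$, and pass to the limit $\delta\to0$ (the standard justification of \cite{KSSh}, which also yields the absolute continuity of $s\mapsto E_\tau(s)$). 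Integration by parts in $t$ produces $\tfrac{p}{p+1}\bigl(\int_{\Omega(s)}|u(\tau,x)|^{p+1}dx-\int_{\Omega(s)}|u(s^q,x)|^{p+1}dx\bigr)$; the diffusion term is bounded below by $d_0\int_{s^q}^{\tau}\!\int_{\Omega(s)}|\nabla_xu|^{p+1}dxdt$ by \eqref{coercit}; and the absorption term equals $\int_{s^q}^{\tau}\!\int_{\Omega(s)}b(t,x)|u|^{\lambda+1}dxdt\ge\int_{s^q}^{\tau}\!\int_{\Omega(s)}a_1(t)g_1(d(x))|u|^{\lambda+1}dxdt$ by the lower bound on $b$. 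The only term of the wrong sign is the lateral flux $\int_{s^q}^{\tau}\!\int_{\partial\Omega(s)}(\sum_ia_i\nu_i)u\,d\sigma dt$, which by \eqref{growth} is at most $d_1\int_{s^q}^{\tau}\!\int_{\partial\Omega(s)}|\nabla_xu|^{p}|u|\,d\sigma dt$. Relabelling constants, this gives $B_\tau(s)\le c\,(I_0+I_{\mathrm{lat}})$, with $I_0:=\int_{\Omega(s)}|u(s^q,x)|^{p+1}dx$ and $I_{\mathrm{lat}}:=\int_{s^q}^{\tau}\!\int_{\partial\Omega(s)}|\nabla_xu|^{p}|u|\,d\sigma dt$.

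The second step identifies the ``dissipation budget''. Differentiating \eqref{4.5} in $s$, and using that the bottom face $\{t=s^q\}$ moves with $s$ while $g_1(d(x))=g_1(s)$ on $\partial\Omega(s)$, one gets (as in \eqref{dif1}), for a.a.\ $s$,
\[
-\frac{d}{ds}E_\tau(s)\ \ge\ q s^{q-1}\!\!\int_{\Omega(s)}\!\!\bigl(|\nabla_xu(s^q,x)|^{p+1}+a_1(s^q)g_1(d(x))|u(s^q,x)|^{\lambda+1}\bigr)dx\ +\ \int_{s^q}^{\tau}\!\!\!\int_{\partial\Omega(s)}\!\!\bigl(|\nabla_xu|^{p+1}+a_1(t)g_1(s)|u|^{\lambda+1}\bigr)d\sigma dt,
\]
a sum of four nonnegative quantities, each individually $\le -E_\tau'(s)$. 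Now in the third step: for $I_0$, write $|u(s^q,\cdot)|^{p+1}=\bigl(a_1(s^q)g_1(s)|u(s^q,\cdot)|^{\lambda+1}\bigr)^{\frac{p+1}{\lambda+1}}\bigl(a_1(s^q)g_1(s)\bigr)^{-\frac{p+1}{\lambda+1}}$ and apply Hölder in $x$ with exponents $\tfrac{\lambda+1}{p+1},\tfrac{\lambda+1}{\lambda-p}$; the first factor, raised to $\tfrac{p+1}{\lambda+1}$, is $\le\bigl((q s^{q-1})^{-1}(-E_\tau'(s))\bigr)^{\frac{p+1}{\lambda+1}}$ by Step 2 (after using $g_1(d(x))\ge g_1(s)$ on $\Omega(s)$), while $\bigl(a_1(s^q)g_1(s)\bigr)^{-\frac{p+1}{\lambda+1}}|\Omega(s)|^{\frac{\lambda-p}{\lambda+1}}\le C\,g_1(s)^{-\frac{p+1}{\lambda+1}}$ since $a_1$ is continuous and positive on the compact set $\{s^q:0<s<s_\Omega\}$ (choose $s_\Omega$ with $s_\Omega^q<\tau$). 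This is exactly the second term of \eqref{4.55}, the factor $s^{-\frac{(q-1)(p+1)}{\lambda+1}}$ arising from $(q s^{q-1})^{-\frac{p+1}{\lambda+1}}$. For $I_{\mathrm{lat}}$, split $|\nabla_xu|^{p}|u|=|\nabla_xu|^{p}\cdot\bigl(a_1(t)g_1(s)\bigr)^{\frac1{\lambda+1}}|u|\cdot\bigl(a_1(t)g_1(s)\bigr)^{-\frac1{\lambda+1}}$ and apply Hölder on $\partial\Omega(s)\times(s^q,\tau)$ with exponents $\tfrac{p+1}{p},\ \lambda+1,\ \tfrac{(p+1)(\lambda+1)}{\lambda-p}$: the first two factors are controlled by Step 2 (they yield $(-E_\tau'(s))^{p/(p+1)}$ and $\bigl(g_1(s)^{-1}(-E_\tau'(s))\bigr)^{1/(\lambda+1)}$), and the third collects the time-integral of a negative power of $a_1$, i.e.\ $\Phi(\tau)$, times a harmless power of $|\partial\Omega(s)|$. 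Since $\tfrac{p}{p+1}+\tfrac1{\lambda+1}=\tfrac{1+p(\lambda+2)}{(\lambda+1)(p+1)}$ and the total power of $g_1(s)$ is $-\tfrac1{\lambda+1}$, this reproduces the first term of \eqref{4.55}; substituting both bounds into $B_\tau(s)\le c(I_0+I_{\mathrm{lat}})$ finishes the proof.

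The genuinely delicate point is Step 1: rigorously using $u$ as a test function despite its weak regularity and despite the fact that the cylinder $Q_\tau(s)$ moves both laterally (through $\partial\Omega(s)$) and from below (through $\{t=s^q\}$), and establishing that $s\mapsto E_\tau(s)$ is absolutely continuous with $-E_\tau'(s)$ bounded below by the four-term expression of Step 2. This is precisely the local-energy-method machinery behind the lemmas quoted in \cite{KSSh} and \cite{DV1}, which has to be re-run keeping track of the new absorption contribution and of the $s$-dependence of the bottom face; once the dissipation budget of Step 2 is correctly set up, the Hölder/interpolation bookkeeping of Step 3 is routine.
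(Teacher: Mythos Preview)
Your proposal is correct and follows essentially the same route as the paper: test \eqref{4.1} with $u$ times a Lipschitz cut-off in $d(x)$ on $(s^q,\tau)$, pass to the limit to obtain the basic inequality $h_\tau(s)+E_\tau(s)\le c_1 I_{\mathrm{lat}}+c_2 h_{s^q}(s)$ (this is exactly \eqref{4.7}), derive the four-term lower bound \eqref{4.10} for $-E_\tau'(s)$, and then estimate $I_{\mathrm{lat}}$ and $h_{s^q}(s)$ by H\"older. The only cosmetic differences are that the paper places its cut-off transition in $(s,s+\delta)$ rather than $(s-\delta,s)$ and splits the H\"older for $I_{\mathrm{lat}}$ into a spatial step followed by a temporal step (your single triple H\"older on $\partial\Omega(s)\times(s^q,\tau)$ gives the same bound); note, incidentally, that in your bookkeeping the factor $g_1(s)^{-1/(\lambda+1)}$ actually comes from the \emph{third} H\"older factor (the one producing $\Phi(\tau)$), not from the second, though your stated total power of $g_1(s)$ is correct.
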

\begin{proof}
Let us fix $s>0$, $\delta>0$ and introduce Lipschitz cut-off function
$\xi_{s,\delta}(r):\xi_{s,\delta}(r)=0$ for $r\leq s$, $\xi_{s,\delta}(r)=1$ for
$r>s+\delta$, $\xi_{s,\delta}(r)=\frac{r-s}\delta$ for $r:s<r<s+\delta$. Now we
substitute the test function $\eta(t,x)=u(t,x)\xi_{s,\delta}(d(x))$ into integral
identity \eqref{4.1}. Then using the formula of integration by parts (see for example
\cite{AL1}) we get:
\begin{equation}\label{4.6}
\begin{aligned}
&\frac p{p+1}\int_{\Omega(s)}|u(b,x)|^{p+1}\xi_{s,\delta}(d(x))dx+
\int_a^b\int_{\Omega(s)} \Big(\sum_{i=1}^na_i(... ,\nabla_x
u)u_{x_i}+b(t,x)|u|^{\lambda+1}\Big)\xi_{s,\delta}(d(x))dxdt=
\\&=\frac p{p+1} \int_{\Omega(s)}|u(a,x)|^{p+1}\xi_{s,\delta}(d(x))dx
-\int_a^b\int_{\Omega(s)\setminus\Omega(s+\delta)}\sum_{i=1}^na_i(... ,\nabla_x
u)u\xi_{s,\delta}(d(x))_{x_i}dxdt.
\end{aligned}
\end{equation}
Let us take in \eqref{4.6} $b=\tau<T$, $a=s^q$. Then passing to the limit
$\delta\rightarrow0$ and using conditions \eqref{coercit}, \eqref{growth} we derive by
standard computations the following inequality:
\begin{equation}\label{4.7}
h_\tau(s)+E_\tau(s)\leq c_1\int^\tau_{s^q}\int_{\partial\Omega(s)}|\nabla_x
u(t,x)|^p|u|d\sigma dt+c_2h_{s^q}(s) \quad \text{for a.a. }s\in(0,s_\Omega),
\end{equation}
where $c_1<\infty,\,c_2<\infty$ depend on $d_0,\,d_1,\,p,\,n$ only. Let us estimate the
terms in the right hand side from above. Using H\"{o}lder inequality we get:
\begin{equation*}
\begin{aligned}
&\int_{\partial \Omega(s)}|\nabla_xu|^p|u|d\sigma =\int_{\partial \Omega(s)}|u|
g_1(s)^{\frac{1}{\lambda+1}}a_1(t)^{\frac{1}{\lambda+1}}|\nabla_x u|^p
a_1(t)^{-\frac{1}{\lambda+1}}g_1(s)^{-\frac{1}{\lambda+1}}d\sigma\leq
\\&\leq c_3\biggr(\int_{\partial\Omega(s)}|u|^{\lambda+1}a_1(t)
g_1(s)d\sigma\biggr)^{\frac{1}{\lambda+1}}\biggr(\int_{\partial\Omega(s)}|\nabla_xu|^{p+1}d\sigma\biggr)^{\frac{p}{p+1}}
a_1(t)^{-\frac{1}{\lambda+1}}g_1(s)^{-\frac{1}{\lambda+1}},
\end{aligned}
\end{equation*}
where $c_3=(\text{meas }\partial\Omega)^{\frac{\lambda-1}{(\lambda+1)(p+1)}}$.
Integrating the last inequality with respect to $t$ and using H\"{o}lder and Young
inequalities, we derive
\begin{equation}\label{4.8}
\begin{aligned}
&\int^\tau_{s^\beta}\int_{\partial\Omega(s)}|\nabla_xu|^p|u|d\sigma dt\leq
c_4g_1(s)^{-\frac{1}{\lambda+1}}\biggr(\int^\tau_{s^\beta}
a_1(t)^{-\frac{p+1}{\lambda-p}}dt\biggr)^{\frac{\lambda-p}{(\lambda+1)(p+1)}}\times
\\&\times\biggr(\int^\tau_{s^\beta}\int_{\partial\Omega(s)}\big(|\nabla_xu|^{p+1}+
a_1(t)g_1(s)|u|^{\lambda+1}\big)d\sigma
dt\biggr)^{\frac{1+p(\lambda+2)}{(\lambda+1)(p+1)}}.
\end{aligned}
\end{equation}
We estimate the second term of the right hand side of \eqref{4.7} using the monotonicity
of the function $g_1(\cdot)$ and H\"{o}lder inequality:
\begin{equation}\label{4.9}
\begin{aligned}
&h_{s^q}(s)=\int_{\Omega(s)}|u(s^q,x)|^{p+1}a_1(s^q)^{\frac{p+1}{\lambda+1}}
g_1(d(x))^{\frac{p+1}{\lambda+1}}a_1(s^q)^{-\frac{p+1}{\lambda+1}}
g_1(d(x))^{-\frac{p+1}{\lambda+1}}dx\leq
\\&\leq c_5\biggr(\int_{\Omega(s)}|u(s^q,x)|^{\lambda+1}a_1(s^q)g_1(d(x))dx\biggr)^{\frac{p+1}{\lambda+1}}
a_1(s^q)^{-\frac{p+1}{\lambda+1}}g_1(s)^{-\frac{p+1}{\lambda+1}},\quad c_5=(\text{meas
}\Omega)^{\frac{\lambda-p}{\lambda+1}}.
\end{aligned}
\end{equation}
It is easy to check that the following inequality holds:
\begin{equation}\label{4.10}
\begin{aligned}
&-\frac{d}{ds}E_\tau(s)\geq\int^\tau_{s^q}\int_{\partial\Omega(s)}\big(|\nabla_xu|^{p+1}+
a_1(t)g_1(s)|u|^{\lambda+1}\big)d\sigma dt+
\\&+qs^{q-1}\int_{\Omega(s)}\big(|\nabla_xu(s^q,x)|^{p+1}+
a_1(s^q)g_1(d(x))|u(s^q,x)|^{\lambda+1}\big)dx
\end{aligned}
\end{equation}
for almost all $s:0<s<s_\Omega$. Using estimates \eqref{4.8}, \eqref{4.9} and relation
\eqref{4.10} we deduce from \eqref{4.7} inequality \eqref{4.55} with
$C_2=c_2c_5\big(\min_{0\leq s\leq s_0}a_1(s^q)\big)^{-\frac{p+1}{\lambda+1}}$,
$C_1=c_1c_4$.
\end{proof}

Now using the monotonic decreasing of the function $h_\tau(s)$ we derive by the simple
computation from \eqref{4.55} the following inequality:
\begin{equation}\label{4.11}
\begin{aligned}
&-\frac{d}{ds}B_\tau(s)\geq
\min\biggr\{H_\tau^{(1)}(s,B_\tau(s)),H^{(2)}(s,B_\tau(s))\biggr\}\ \ \text{ for a.a. }
s\in (0,s_\Omega),\ \forall\tau<T,
\\&H_\tau^{(1)}(s,B_\tau(s)):=\biggr(\frac{g_1(s)^{\frac{1}{\lambda+1}}B_\tau(s)}
{2C_1\Phi(\tau)^{\frac{\lambda-p}{(\lambda+1)(p+1)}}}\biggr)^{\frac{(\lambda+1)(p+1)}{1+p(\lambda+2)}},\
H^{(2)}(s,B_\tau(s)):=\biggr(\frac{g_1(s)^{\frac{p+1}{\lambda+1}}B_\tau(s)}
{2C_2s^{-\frac{(q-1)(p+1)}{\lambda+1}}}\biggr)^{\frac{\lambda+1}{p+1}}.
\end{aligned}
\end{equation}
Using condition \eqref{4.3} for $q$ we deduce by the standard analysis (see, lemma 2.2
in \cite{Sh1}) that an arbitrary solution of ODI \eqref{4.11} is bounded from above by a
solution of ODE $\frac{d}{ds}B(s)=-H_\tau^{(1)}(s,B(s))$, $B(0)=\infty$, and
consequently for $B_\tau(s)$ from \eqref{4.11} the following estimate holds:
\begin{equation}\label{4.12}
B_\tau(s)\leq C_3\Phi(\tau)G_1(s)\ \ \ \forall s\in(0,s_\Omega),\ \ \forall \tau<T,
\end{equation}
where
$G_1(s)=\biggr(\int^s_0g_1(h)^{\frac{p+1}{1+p(\lambda+2)}}dh\biggr)^{-\frac{1+p(\lambda+2)}{\lambda-p}}$,
$C_3=(2C_1)^{\frac{(\lambda+1)(p+1)}{\lambda-p}}\biggr({\frac{\lambda-p}{1+p(\lambda+2)}}\biggr)^{\frac{1+p(\lambda+2)}{\lambda-p}}$

{\bf Proof of Theorem \ref{Th.2}.} Due to condition \eqref{abs5} for the function
$\Phi(\cdot)$ from \eqref{4.55} the following estimate holds:
\begin{equation}\label{4.13}
\Phi(t)\leq\Phi_0(t):=c_0T\exp\biggr(\frac{\omega(p+1)}{\lambda+1}(T-t)^{-\frac1{p+\mu}}\biggr)\
\ \ \forall t<T.
\end{equation}
Now inequality \eqref{4.12} yields the following estimate:
\begin{equation}\label{4.14}
\begin{aligned}
&h(t,s)+E(t,s):=\int_{\Omega(s)}|u(t,x)|^{p+1}dx+\int_{\frac T2}^t\int_{\Omega(s)}
|\nabla_xu(\tau,x)|^{p+1}dxd\tau\leq C_3G_1(s)\Phi_0(t)
\\&\forall t\in\biggr(\frac
T2,T\biggr),\ \ \forall s:0<s<s'_\Omega:=\min\biggr(s_\Omega,\biggr(\frac
T2\biggr)^\frac1q\biggr),
\end{aligned}
\end{equation}
where $q$ is from \eqref{4.3}. Now we fix some value $\bar{s}\in(0,s'_\Omega)$ and
deduce from \eqref{4.14} the following "initial" energy estimate:
\begin{equation}\label{4.15}
h(t,\bar{s})+E(t,\bar{s})\leq C_3G_1(\bar{s})\Phi_0(t)\quad\forall\,t\in(t_0,T),\
t_0=\frac T2.
\end{equation}
Now we will consider $u(t,x)$ as a solution of equation \eqref{da1} in the domain
$(t_0,T)\times\Omega(\bar{s})$, $t_0=2^{-1}T$. Then using the condition $b(t,x)\geq0$ we
deduce by standard computation the following analog of relation \eqref{energy2} from
lemma \ref{Lem1}:
\begin{equation}\label{4.16}
\begin{aligned}
&h(b,s)+\frac{d_0(p+1)}pE_a^{(b)}(s)\leq h(a,s)+
\biggr(\int_a^b\int_{\partial\Omega(s)}|\nabla_xu(t,x)|^{p+1}d\sigma dt\biggr)^\frac
p{p+1}\times
\\&\times\biggr[k_1\int_a^bh(t,s)dt+
k_2\big(E_a^{(b)}(s)\big)^\theta\Big(\int_a^bh(t,s)dt\Big)^{1-\theta}\biggr]^\frac1{p+1}
\quad\forall s\in(\bar{s},s'_\Omega),
\\&\forall\,a,b:T_0=2^{-1}T\leq a<b<T,\,\theta=(p+1)^{-1},\,s'_\Omega\text{ is from
\eqref{4.14}}.
\end{aligned}
\end{equation}
Then starting from \eqref{4.16} and using condition \eqref{4.15} as the initial
condition for the corresponding systems of ODI we repeat all stages of the proof of
Theorem \ref{Th.1}. As result, using additionally estimate \eqref{4.13} we obtain the
estimate similar to \eqref{main1}:
\begin{equation}\label{4.17}
\begin{aligned}
&h(t,s)+E(t,s)\leq
c_0c_1TC_3G_1(\bar{s})\exp\left(\bar{c}_2\omega^\frac{p+\mu}\mu(s-\bar{s})^{-\frac{p+1}\mu}\right)
\\&\forall\,t\in(2^{-1}T,T),\,\forall\,\bar{s}\in(0,s'_0)
\,\forall\,s\in(\bar{s},s'_0), s'_0\text{ is from \eqref{main1}},
\end{aligned}
\end{equation}
where $\bar{c}_2=c_2\left(\frac{p+1}{\lambda+1}\right)^\frac{p+\mu}\mu$; $c_2$ and $c_1$
are from \eqref{main1}. Optimizing the last estimate with respect to a free parameter
$\bar{s}:0<\bar{s}<s<s'_0$ we get:
\begin{equation}\label{4.18}
h(t,s)+E(t,s)\leq
C_4\min_{0<\bar{s}<s}G_1(\bar{s})\exp\left(\bar{c}_2\omega^\frac{p+\mu}\mu(s-\bar{s})^{-\frac{p+1}\mu}\right),\quad
C_4=c_0c_1TC_3,
\end{equation}
which is desired estimate \eqref{bound08} with $K_1=C_4$, $K_2=\bar{c}_2$.

{\bf Example 1.} Let $g_1(s)=\exp\left(-as^{-\nu}\right)$, $a=const>0$, $\nu=const>0$.
Integrating by parts we easily get the equality:
\begin{equation}\label{4.20}
\int_0^s\exp\left(-bh^{-\nu}\right)\left(1+\frac{\nu+1}{b\nu}\,h^\nu\right)dh=
\frac{s^{\nu+1}}{b\nu}\exp\left(-bs^{-\nu}\right)\quad \forall\,s>0,\,b=const>0.
\end{equation}
Therefore,
\begin{equation}\label{4.21}
\int_0^s\exp\left(-bh^{-\nu}\right)dh\geq b_1s^{\nu+1}\exp\left(-bs^{-\nu}\right)\quad
\forall\,s:\,0<s<\tilde{s}:=\left(\frac{b\nu}{\nu+1}\right)^\frac1\nu,
\end{equation}
where $b_1=(2b\nu)^{-1}$. Now due to \eqref{4.21} with $b=\frac{a(p+1)}{1+p(\lambda+2)}$
we get:
\begin{equation}\label{4.22}
\begin{aligned}
&G_1(s)\leq G_1^{(0)}(s):= B_1s^{-\frac{(\nu+1)(1+p(\lambda+2))}{\lambda-p}}
\exp\left(\frac{a(p+1)}{\lambda-p}\,s^{-\nu}\right)
\\&\forall\,s:\,0<s<\tilde{s}:=\left(\frac{a\nu(p+1)}{(1+p(\lambda+2))(\nu+1)}\right)^\frac1\nu,
\quad
B_1=\left(\frac{2a\nu(p+1)}{1+p(\lambda+2)}\right)^\frac{1+p(\lambda+2)}{\lambda-p}.
\end{aligned}
\end{equation}
and estimate \eqref{4.18} holds with $G_1^{(0)}(\cdot)$ instead of $G_1(\cdot)$.

{\bf Example 2.} Let $g_1(s)=as^{\nu}$, $a=const>0$, $\nu=const\geq0$.

Then
$G_1(s)=a^{-\frac{p+1}{\lambda-p}}\left(1+\frac{\nu(p+1)}{1+p(\lambda+2)}\right)^\frac{1+p(\lambda+2)}{\lambda-p}s^{-\frac{(\nu+1)(p+1)+p(\lambda+1)}{\lambda-p}}$.

\vskip 20 pt

{\bf Acknowledgements.} The research of A.~Shishkov for this publication was supported
by Ministry of Education and Science of Russian Federation (the Agreement
N.02.a03.21.004).

The research of Ye.~Yevgenieva was supported by the Project 0117U006353 from the
Department of Targeted Training of Taras Shevchenko National University of Kyiv at the
NAS of Ukraine.

\vskip 20 pt

\bigskip

CONTACT INFORMATION

\medskip
{\bf A.E.~Shishkov,}\\Institute of Applied Mathematics and Mechanics of NASU, Slavyansk,
Ukraine,
\\Peoples' Friendship University of Russia, Moscow, Russia,
\\{\bf aeshkv@yahoo.com}

\medskip
{\bf Ye.A.~Yevgenieva,}
\\Institute of Applied Mathematics and Mechanics of NASU, Slavyansk,
Ukraine,
\\{\bf yevgeniia.yevgenieva@gmail.com}


\begin{thebibliography}{9}

\bibitem{AL1} H.\,W.~Alt, S.~Luckhaus, Quasilinear elliptic-parabolic differential equations,
Math. Z. \textbf{183}, No~3, 311--341 (1983).

\bibitem{DV1} J.\,I. Diaz, L. Veron, Local vanishing properties of solutions of elliptic
and parabolic quasilinear equations, Trans. Amer. Math. Soc. \textbf{290}, No~2,
787--814 (1985).

\bibitem{KSSh} A.\,A.~Kovalevsky, I.\,I.~Skrypnik and A.\,E.~Shishkov,
Singular Solutions in Nonlinear Elliptic and Parabolic Equations (De Gruyter Series in
Nonlinear Analysis and Applications 24, De Gruyter, Basel, 2016), p.~435.

\bibitem{Stamp} G. Stampacchia,
\'{E}quations elliptiques du second ordre \`{a} coefficients discontinus, {S\'{e}minaire
de Math\'{e}matiques Sup\'{e}rieures, No.~16 (\'{E}t\'{e},~1965)}.
 --- Montreal: Les Press. Univ. Montreal, 1966.

\bibitem{SGKM1} A.\,A.~Samarskii, V.\,A.~Galaktionov, S.\,P.~Kurdyumov, A.\,P.~Mikhailov,
Regimes with peaking in problems for quasilinear parabolic equations (Nauka, Moscow,
1987), p.~480. (in Russian)

\bibitem{GH1} B.\,H.~Gilding, M.\,A.~Herrero,
Localization and blow-up of termal waves in nonlinear heat conduction with peaking,
Math. Ann. \textbf{282}, No~2, 223--242 (1988).

\bibitem{CE1} C.~Cortazar, M.~Elgueta,
Localization and boundedness of the solutions of the Neumann problem for a filtration
equation, {Nonlinear Anal.} \textbf{13}, No~1, 33--41 (1989).

\bibitem{GG1} B.\,H.~Gilding, I.~Goncerzewicz,
Localization of solutions of exterior domain problems for the porous media equation with
radial symmetry, {SIAM J. Math. Ann.} \textbf{31}, No~4, 862--893 (2000).

\bibitem{Ven1} T.\,O.~Venegas,
The porous media equation with blowing up boundary data, {Adv. Nonlinear Stud.},
\textbf{9}, No~1, 1--27 (2009).

\bibitem{GSh1} V.\,A.~Galaktionov, A.\,E.~Shishkov,
Saint-Venant's principle in blow-up for higher order quasilinear parabolic equations,
{Proc. Roy. Soc. Edinburgh. Sect. A} {\bf 133}, No~5, 1075--1119 (2003).

\bibitem{GSh2} V.\,A.~Galaktionov, A.\,E.~Shishkov,
Structure of boundary blow-up for higher-order quasilinear parabolic equations, {Proc.
R. Soc. Lond., Ser. A, Math. Phys. Eng. Sci.} {\bf 460}, No~2051, 3299--3325 (2004).

\bibitem{ShSh1} A.\,E.~Shishkov, A.\,G.~Shchelkov,
Boundary regimes with peaking for general quasilinear parabolic equations in
multidimensional domains, {Math. Sb.} \textbf{190}, No~3-4, 447--479 (1999).
(in~Russian)

\bibitem{DPP1} Y.~Du, R.~Peng, P.~Pola\^{c}ik,
The parabolic logistic equation with blow-up initial and boundary values, {Journal
D'Analyse Mathematique} \textbf{118}, 297--316 (2012).

\bibitem{Sh1} A.~Shishkov,
Large solutions of parabolic logistic equation with spatial and temporal degeneracies,
{DCDS, ser.S} \textbf{10}, No~10, 895--907 (2017).

\end{thebibliography}
\end{document}